\newtheorem{theorem}{Theorem}[section]
\newtheorem{lemma}[theorem]{Lemma}
\newtheorem{corollary}[theorem]{Corollary}
\newtheorem{proposition}[theorem]{Proposition}
\theoremstyle{definition}
\newtheorem{definition}{Definition}[section]
\newtheorem{example}{Example}[section]
\theoremstyle{remark}
\newtheorem{remark}{Remark}[section]
\numberwithin{equation}{section}
\numberwithin{figure}{section}
\numberwithin{table}{section}
\def\RR{\mathbb{R}}
\def\Rd{\mathbb{R}^d}
\def\NN{\mathbb{N}}
\def\PP{\mathbb{P}}
\def\vx{\boldsymbol{x}}
\def\vy{\boldsymbol{y}}
\def\vc{\boldsymbol{c}}
\def\vz{\boldsymbol{z}}
\def\vxi{\boldsymbol{\xi}}
\def\vv{\boldsymbol{v}}
\def\vm{\boldsymbol{m}}
\def\vf{\boldsymbol{f}}
\def\vg{\boldsymbol{g}}
\def\vu{\boldsymbol{u}}
\def\ud{\textup{d}}
\def\me{\textup{e}}
\def\vP{\mathbf{P}}
\def\vQ{\mathbf{Q}}
\def\np{n_p}
\def\nb{n_b}
\newcommand{\abs}[1]{\left\lvert#1\right\rvert}
\newcommand{\norm}[1]{\lVert#1\rVert}
\def\Cont{\mathrm{C}}
\def\Leb{\mathrm{L}}
\def\Order{\mathcal{O}}
\def\diag{\mathrm{diag}}
\def\Span{\mathrm{span}}
\def\Hil{\mathcal{H}}
\def\Hilbert{\mathrm{H}}
\def\vA{\mathsf{A}}
\def\vV{\mathsf{V}}
\def\vB{\mathsf{B}}
\def\vPhi{\mathsf{\Phi}}
\def\vPsi{\mathsf{\Psi}}
\def\Filter{\mathcal{F}}
\def\Borel{\mathscr{B}}
\def\Normal{\mathcal{N}}
\def\Domain{\mathcal{D}}
\def\Eset{\mathcal{E}}
\def\vnoise{\boldsymbol{\xi}}
\def\Cov{\text{Cov}}
\def\Var{\text{Var}}
\def\erfc{\textup{erfc}}
\def\diag{\text{diag}}
\def\Kstar{\overset{*}{K}}
\def\vS{\boldsymbol{S}}
\def\vmu{\boldsymbol{\mu}}
\def\vK{\mathsf{K}}
\def\vKstar{\overset{*}{\mathsf{K}}}
\def\vk{\boldsymbol{k}}
\def\vkstar{\overset{*}{\boldsymbol{k}}}
\def\Aset{\mathcal{A}}
\DeclareMathOperator*{\argmin}{argmin}
\DeclareMathOperator*{\argmax}{argmax}
\begin{document}

\title{Approximation of Stochastic Partial Differential Equations by a Kernel-based Collocation Method}

\author{ Igor Cialenco, Gregory E. Fasshauer and Qi Ye\thanks{Corresponding author} \\
{\small Department of Applied Mathematics, Illinois Institute of Technology }\\
{\small 10 West 32nd Str, Bld E1, Room 208 }\\
{\small Chicago, Illinois, USA 60616}\\
{\small \mbox{igor@math.iit.edu}, \mbox{fasshauer@iit.edu} and  \mbox{qye3@iit.edu}}\\
{\small \emph{Updated Version in International Journal of Computer Mathematics}} \\
{\small \emph{Closed to Ye's Doctoral Thesis~\cite{YePhD2012}}}
}

\date{}

\maketitle


\begin{abstract}

In this paper we present the theoretical framework needed to justify the use of a kernel-based collocation method (meshfree approximation method) to estimate the solution of high-dimensional stochastic partial differential equations (SPDEs). Using an implicit time stepping scheme, we transform stochastic parabolic equations into stochastic elliptic equations. Our main attention is concentrated on the numerical solution of the elliptic equations at each time step. The estimator of the solution of the elliptic equations is given as a linear combination of reproducing kernels derived from the differential and boundary operators of the SPDE centered at collocation points to be chosen by the user. The random expansion coefficients are computed by solving a random system of linear equations. Numerical experiments demonstrate the feasibility of the method.

{\bf Keywords:}
kernel-based collocation; numerical solutions; stochastic partial differential equation; reproducing kernel; Mat\'ern function; Gaussian process.

{\bf AMS Subject Classification:} 
46E22; 65D05; 60G15; 60H15; 65N35.

\end{abstract}


\section{Introduction}

Stochastic partial differential equations (SPDEs) frequently arise from applications in areas such as physics, engineering and finance. However, in many cases it is difficult to derive an explicit form of their solution. Moreover, current numerical algorithms often show limited success for high-dimensional problems or in complex domains -- even for deterministic partial differential equations. The \emph{kernel-based} approximation method (\emph{meshfree} approximation method~\cite{Buhmann2003,Fasshauer2007,Wendland2005}) is a relatively new numerical tool for the solutions of high-dimensional problems. In this paper we apply -- to our knowledge for the first time -- such a kernel-based collocation method to construct numerical estimators for stochastic partial differential equations. Galerkin-type approximation methods based on the eigenvalues and eigenfunctions of the underlying differential operator are currently very popular for the numerical solution of SPDEs \cite{DebBabuvskaOden2001,Muller-GronbachRitterWagner2008,JentzenKloeden2010a}. With the kernel-based meshfree collocation method introduced here explicit knowledge of these eigenvalues and eigenfunctions is not required since the kernels can be directly obtained as Green kernels of the differential operators \cite{FasshauerYe2011,FasshauerYe2011online}.  Stochastic collocation methods using a polynomial basis are also frequently found in the literature \cite{BabuvskaNobileTempone2010,NobileTemponeWebster2008}. These methods usually require the collocation points to lie on a regular grid. In our method the collocation points can be placed at rather arbitrarily scattered locations. This allows for the use of either deterministic or random designs such as, e.g., uniform or Sobol' points, but also for adaptively chosen locations. In this paper we do not study the design aspect of our algorithm, but reserve this important aspect for future work. Another advantage of using a meshfree method is its ability -- also not explicitly pursued here -- to deal with problems on a complex domain $\Domain\subset\Rd$, $d\geq1$, by using appropriately placed collocation points.
Another advantage of this method is its high efficiency, in the sense that once certain matrices are inverted and factored we can compute, essentially for free, the value of the approximated solution at any point in the spatial domain and at any event from sample space. In particular the method is suitable for simulation of a large number of sample paths of the solution.
In this article we present only a general framework for this new numerical method and prove weak convergence of the corresponding schemes. We conclude the paper with a numerical implementation of this method applied to a one-dimensional stochastic heat equation with Dirichlet boundary conditions driven by an additive space-time white noise (colored in space). Much more details, as well as some of the aspects just mentioned, will be discussed in Qi Ye's Ph.D. thesis~\cite{YePhD2012} or in future publications.


\subsection{The method in a nutshell}

Assume that $\Domain$ is a \emph{regular} open bounded domain in $\Rd$ (see Appendix~\ref{s:diff-bound}), and let $\Hil$ be a separable Hilbert space of functions defined on $\Domain$.
Also, let $(\Omega_W, \Filter_W,\{\Filter_t\}, \PP_W)$ be a stochastic basis with the usual assumptions.
We consider the following parabolic It\^{o} equation
\begin{equation}\label{eq:main}
\begin{cases}
\ud U_t=\mathcal{A} U_t \ud t+ \sigma \ud W_t,&\text{in } \Domain,\quad 0<t<T,\\
BU_t=0,&\text{on }\partial\Domain,\\
U_0=u_0,
\end{cases}
\end{equation}
where $\mathcal{A}$ is a linear elliptic operator in $\Hil$, $B$ is a boundary operator for Dirichlet or Neumann boundary conditions, $u_0\in\Hil$, and $W$ is a Wiener process in $\Hil$, with mean zero and spatial covariance function $R$ given by $\mathbb{E}(W(t,\vx)W(s,\vy))= \min\{t,s\}R(\vx,\vy), \ \vx,\vy\in\Domain, \ t,s>0$, and $\sigma>0$ (see for instance~\cite{Chow2007}).

We assume that equation \eqref{eq:main} has a unique solution $U\in\Leb_2(\Omega_W\times(0,T); \Hil)$.

The proposed numerical method for solving a general SPDE of the form \eqref{eq:main} can be described as follows:
\begin{enumerate}

\item[S1)] Discretize \eqref{eq:main} in time by the implicit Euler scheme
  at equally spaced time points $0=t_0<t_1<\ldots <t_n=T$,
\begin{equation}\label{eq:mainDiscrete}
  U_{t_j} - U_{t_{j-1}} = \mathcal{A}U_{t_j} \delta t + \sigma \delta W_{j}, \quad j=1,\ldots,n,
\end{equation}
where $\delta t:=t_{j}-t_{j-1}$ and $\delta W_{j} := W_{t_{j}} - W_{t_{j-1}}$.

\item[S2)] Since it follows from \eqref{eq:mainDiscrete} and the definition of Brownian motion that the noise increment $\delta W_j$ at each time instance $t_j$ is independent from the solution $U_{t_{j-1}}$ at the previous step, we simulate the Gaussian field with covariance structure $R(\vx,\vy)$ at a finite collection of predetermined \emph{collocation points}
\[
X_{\Domain}:=\left\{\vx_1,\cdots,\vx_N\right\}\subset\Domain,\quad
X_{\partial\Domain}:=\left\{\vx_{N+1},\cdots,\vx_{N+M}\right\}\subset\partial\Domain.
\]

\item[S3)] Let the differential operator $P :=I-\delta t\mathcal{A}$, and the  noise term $\xi:=\sigma \delta W_j$. Since $\xi$ is a Gaussian field with
$\mathbb{E}(\xi_{\vx})=0$ and $\Cov(\xi_{\vx}\xi_{\vy})=\sigma^2\delta tR(\vx,\vy)$, equation \eqref{eq:mainDiscrete} together with the corresponding boundary condition becomes an elliptic SPDE of the form
\begin{equation}\label{eq:S3}
\begin{cases}
Pu=f+\xi,&\text{in }\Domain,\\
Bu=0,&\text{on }\partial\Domain,
\end{cases}
\end{equation}
where $u:=U_{t_j}$ is seen as an unknown part and $f:=U_{t_{j-1}}$ and $\xi$ are viewed as given parts.
We will solve for $u$ using a kernel-based collocation method written as
\begin{equation}\label{e:kernel_expansion}
u(\vx)\approx\hat{u}(\vx):=
\sum_{k=1}^Nc_{k}P_2\Kstar(\vx,\vx_k)+\sum_{k=1}^Mc_{N+k}B_2\Kstar(\vx,\vx_{N+k}),\quad \vx\in\Domain,
\end{equation}
where $K$ is a \emph{reproducing kernel} and the integral-type kernels $\Kstar, P_2\Kstar, B_2\Kstar$ are defined in Lemmas~\ref{l:Gauss-RK} and~\ref{l:P-B-Cov-expan}. The unknown random coefficients $\vc:=\left(c_1,\cdots,c_{N+M}\right)^T$ are obtained by solving a random system of linear equations (with constant deterministic system matrix and different random right-hand side) at each time step.
Details are provided in Section \ref{s:COL-SPDE}.

\item[S4)] Repeat S2 and S3 for all $j=1,\ldots,n$.

\end{enumerate}

Obviously, many other -- potentially better -- time stepping schemes could be applied here. However, as mentioned earlier, we focus mainly on step S3 and are for the time being content with using the implicit Euler scheme.
Naturally, the rate of convergence of the above numerical scheme depends on the size of the time step $\delta t$, and the \emph{fill distance} $h_X:=\sup_{\vx\in\Domain}\min_{\vx_k\in X_{\Domain}\cup X_{\partial\Domain}}\norm{\vx-\vx_k}_2$ of the collocation points.
We support this statement empirically in Section~\ref{sec:numerical-experimants}. We should mention that even for deterministic time-dependent PDEs to find the exact rates of convergence of kernel-based methods is a delicate and nontrivial question, only recently solved in \cite{HonSchaback2010}. We will address this question in the case of SPDEs in future works.

The fundamental building block of our mesh-free method is the reproducing kernel $K:\Domain\times\Domain\rightarrow\RR$ and its reproducing-kernel Hilbert space $\Hilbert_K(\Domain)$ (see Appendix~\ref{s:RKHS} for more details). By the very nature of such a kernel-based method, the approximate solution $U_{t_j}, \ j=1,\ldots,n$, must live in $\Hilbert_K(\Domain)$. Thus, we make the following standing assumption throughout the paper:
\begin{itemize}
\item[] The solution $U$ of the original equation \eqref{eq:main} lies in a Hilbert space $\mathcal{H}$ which can be embedded in the reproducing kernel Hilbert space $\Hilbert_K(\Domain)$.
\end{itemize}
Usually, $\mathcal{H}$ is a Sobolev space $\Hil^m(\Domain)$, for some positive $m$. In this case it is possible to choose an appropriate kernel $K$ such that the above embedding assumption is satisfied. For a general discussion of existence and uniqueness of the solution of problem \eqref{eq:main} see, e.g., \cite{RozovskiiBook,DaPrato,Chow2007}.

\section{Reproducing-kernel collocation method for Gaussian processes}\label{RK-COL-GP}

In this section we briefly review the standard kernel-based approximation method for high-dimensional interpolation problems. However, since we will later be interested in solving a stochastic PDE, we present the following material mostly from the stochastic point of view. For further discussion of this method we refer the reader to the recent survey papers \cite{ScheuererSchabackSchlather2010,Fasshauer2011} and references therein.

Assume that the function space $\Hilbert_K(\Domain)$ is a reproducing-kernel Hilbert space with norm $\| \cdot \|_{K,\Domain}$ and its reproducing kernel $K\in\Cont(\overline{\Domain}\times\overline{\Domain})$ is symmetric positive definite (see Appendix~\ref{d:RKHS}). Given the data values $\{y_j\}_{j=1}^N\subset\RR$ at the collocation points $X_{\Domain}:=\{\vx_j\}_{j=1}^N\subset\Domain$ of an unknown function $u\in\Hilbert_K(\Domain)$, i.e.,
\[
y_j=u(\vx_j),\quad \vx_j=(x_{1,j},\cdots,x_{d,j})\in\Domain\subset\Rd,\quad j=1,\ldots,N,
\]
the goal is to find \textit{an optimal estimator} from $\Hilbert_K(\Domain)$ that interpolates these data.

\begin{definition}[{\cite[Definition~3.28]{BerlinetThomas2004}}]\label{d:Gaussian}
A stochastic process $S:\Domain\times\Omega\rightarrow\RR$ is said to be \emph{Gaussian} with mean $\mu:\Domain\rightarrow\RR$
and covariance kernel $\Phi:\Domain\times\Domain\rightarrow\RR$ on a probability space $(\Omega,\Filter,\PP)$ if,
for any pairwise distinct points $X_{\Domain}:=\{\vx_1,\cdots,\vx_N\}\subset\Domain$, the random vector $\vS:=(S_{\vx_1},\cdots,S_{\vx_N})^T$
is a multi-normal random variable on $(\Omega,\Filter,\PP)$ with mean $\vmu$ and covariance matrix $\vPhi$, i.e.,
$\vS\sim\Normal(\vmu,\vPhi)$,
where $\vmu:=(\mu(\vx_1),\cdots,\mu(\vx_N))^T$ and $\vPhi:=(\Phi(\vx_j,\vx_k))_{j,k=1}^{N,N}$.
\end{definition}


\subsection{Data fitting problems via deterministic interpolation and simple kriging}

In the \emph{deterministic formulation} of kernel interpolation we solve an optimization problem by minimizing the reproducing-kernel norm subject to interpolation constraints, i.e.,
\[
\hat{u}_K = \argmin_{u\in\Hilbert_K(\Domain)}\left\{\norm{u}_{K,\Domain} \text{ s.t. } u(\vx_j)=y_j,\ j=1,\ldots,N\right\}.
\]
In this case, the minimum norm interpolant (also called the collocation solution) $\hat{u}_K(\vx)$ is a linear combination of ``shifts'' of the reproducing kernel $K$,
\begin{equation}\label{e:classical-intrp}
\hat{u}_K(\vx):=\sum_{k=1}^Nc_kK(\vx,\vx_k),\quad \vx\in\Domain ,
\end{equation}
where the coefficients $\vc:=(c_1,\cdots,c_N)^T$ are obtained by solving the following system of linear equations
\begin{equation}\label{eq:DetProblemCs}
\vK\vc=\vy_0,
\end{equation}
with $\vK:=\left(K(\vx_j,\vx_k)\right)_{j,k=1}^{N,N}$ and $\vy_0:=(y_1,\cdots,y_N)^T$.

For simple kriging, i.e., in the \emph{stochastic formulation}, we let $S$ be a Gaussian process with mean $0$ and covariance kernel $K$ on some probability space $(\Omega,\Filter,\PP)$.
Kriging is based on the modeling assumption that $u$ is a realization of the Gaussian field $S$. The data values $y_1,\ldots,y_N$ are then realizations of the random variables $S_{\vx_1},\ldots,S_{\vx_N}$. The optimal unbiased predictor of $S_{\vx}$ based on $\vS$ is equal to
\[
\hat{U}_{\vx}:=\sum_{k=1}^Nc_k(\vx)S_{\vx_k}=
\argmin_{U\in\Span\{S_{\vx_j}\}_{j=1}^N}\mathbb{E}\abs{U-S_{\vx}}^2,
\]
where the coefficients $\vc(\vx):=\left(c_1(\vx),\cdots,c_N(\vx)\right)^T$ are given by
\[
\vc(\vx)=\vK^{-1}\vk(\vx)
\]
with $\vk(\vx):=\left(K(\vx,\vx_1),\cdots,K(\vx,\vx_N)\right)^T$ and the same matrix $\vK$ as above.
We can also compute that
\[
\mathbb{E}(\hat{U}_{\vx}|S_{\vx_1}=y_1,\cdots,S_{\vx_N}=y_N)=\hat{u}_{K}(\vx).
\]

Note that, in the kriging approach we consider only the values of the stochastic process $S$ at the collocation points, and view the obtained vector as a random variable.
However, if we view $S$ as a real function, then
$\PP(S\in\Hilbert_K(\Domain))=0$ by \cite[Theorem~7.3]{LukicBeder2001}. A simple example for this fact is given by the scalar Brownian motion defined in the domain $\Domain:=(0,1)$ (see, e.g., \cite[Example~5.1]{FasshauerYe2011}). This means that it is difficult to apply the kriging formulation to PDE problems. Next we will introduce a new stochastic data fitting approach that will subsequently allow us to perform kernel-based collocation for stochastic PDEs.


\subsection{Data fitting problems via a new stochastic approach} \label{sec:StochasticProblems}

From now on we will view the separable reproducing-kernel Hilbert space $\Hilbert_K(\Domain)$ as a sample space and its Borel $\sigma$-field $\Borel(\Hilbert_K(\Domain))$ as a $\sigma$-algebra to set up the probability spaces so that the stochastic process $S_{\vx}(\omega):=\omega(\vx)$ is Gaussian.
We use the techniques of~\cite{Janson1997,LukicBeder2001} to verify Lemma~\ref{l:Gauss-RK}, which is a restatement of \cite[Theorem~7.2]{LukicBeder2001}.
This theoretical result is a generalized form of Wiener measure defined on the measurable space $(\Cont[0,\infty),\Borel(\Cont[0,\infty)))$, called canonical space, such that the coordinate mapping process $W_x(\omega):=\omega(x)$ is a Brownian motion (see, for instance, \cite{KaratzasShreve1991},~Chapter~2).

\begin{lemma}\label{l:Gauss-RK}
Let the positive definite kernel $K\in\Cont(\overline{\Domain}\times\overline{\Domain})$ be the reproducing kernel of the reproducing-kernel Hilbert space $\Hilbert_K(\Domain)$.
Given a function $\mu\in\Hilbert_K(\Domain)$ there exists a probability measure $\PP^{\mu}$ defined on $(\Omega_K,\Filter_K):=(\Hilbert_K(\Domain),\Borel(\Hilbert_K(\Domain)))$
such that $S_{\vx}(\omega):=\omega(\vx)$ is Gaussian
with mean $\mu$ and covariance kernel $\Kstar$ on $(\Omega_K,\Filter_K,\PP^{\mu})$, where
the integral-type kernel $\Kstar$ of $K$ is given by
\[
\Kstar(\vx,\vy):=\int_{\Domain}K(\vx,\vz)K(\vy,\vz)\ud\vz,\quad \vx,\vy\in\Domain.
\]
Moreover, the process $S$ has the following expansion
\[
S_{\vx}=\sum_{k=1}^{\infty}\zeta_k\sqrt{\lambda_k}e_k(\vx),\quad \vx\in\Domain,\quad \PP^{\mu}\text{-a.s.},
\]
where $\{\lambda_k\}_{k=1}^{\infty}$ and $\{e_k\}_{k=1}^{\infty}$ are the eigenvalues and eigenfunctions of the reproducing kernel $K$,
and $\zeta_k$ are independent Gaussian random variables with mean $\hat{\mu}_k:=\langle \mu,\sqrt{\lambda_k}e_k \rangle_{K,\Domain}$
and variance $\lambda_k$, $k\in\NN$.
\end{lemma}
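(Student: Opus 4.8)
The plan is to build the measure $\PP^{\mu}$ directly from a Karhunen--Lo\`eve-type construction and then verify the claimed Gaussian structure. First I would invoke the spectral decomposition of the reproducing kernel $K$: since $K\in\Cont(\overline{\Domain}\times\overline{\Domain})$ is symmetric positive definite on the bounded domain $\Domain$, Mercer's theorem supplies eigenvalues $\lambda_k>0$ and continuous eigenfunctions $e_k$ of the integral operator with kernel $K$, normalized so that $\{\sqrt{\lambda_k}e_k\}_{k\ge1}$ is an orthonormal basis of $\Hilbert_K(\Domain)$ and $K(\vx,\vy)=\sum_k\lambda_ke_k(\vx)e_k(\vy)$. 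Writing $\mu=\sum_k\hat{\mu}_k\sqrt{\lambda_k}e_k$ with $\hat{\mu}_k=\langle\mu,\sqrt{\lambda_k}e_k\rangle_{K,\Domain}$ and $\sum_k\hat{\mu}_k^2<\infty$, I would then introduce independent Gaussians $\zeta_k\sim\Normal(\hat{\mu}_k,\lambda_k)$ on an auxiliary probability space and form the random series $S:=\sum_k\zeta_k\sqrt{\lambda_k}e_k$.

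The key step is to show this series converges in $\Hilbert_K(\Domain)$ almost surely, so that $S$ genuinely defines an $\Hilbert_K(\Domain)$-valued random element whose law is the desired $\PP^{\mu}$ on $(\Hilbert_K(\Domain),\Borel(\Hilbert_K(\Domain)))$. For this I would use that the partial sums form a martingale in the Hilbert space and compute $\mathbb{E}\,\|S\|_{K,\Domain}^2=\sum_k(\lambda_k+\hat{\mu}_k^2)$ --- wait, here one must be careful: the coefficient of $\sqrt{\lambda_k}e_k$ in the $\Hilbert_K$-expansion is $\zeta_k$, and $\sum_k\mathbb{E}\,\zeta_k^2=\sum_k(\lambda_k+\hat{\mu}_k^2)$ need not be finite because $\sum_k\lambda_k=\int_{\Domain}K(\vx,\vx)\,\ud\vx<\infty$ holds (trace class) but this is exactly what makes it work. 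So I would verify $\sum_k\mathbb{E}\,\zeta_k^2<\infty$, conclude $L^2$- and hence a.s. convergence of the series in $\Hilbert_K(\Domain)$ by the Itô--Nisio theorem, and define $\PP^{\mu}$ as the pushforward law; then automatically $S_{\vx}(\omega)=\omega(\vx)$ on this canonical space, using that point evaluation $\omega\mapsto\omega(\vx)=\langle\omega,K(\cdot,\vx)\rangle_{K,\Domain}$ is continuous, hence $\Borel(\Hilbert_K(\Domain))$-measurable.

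Next I would identify the covariance: for pairwise distinct $\vx_1,\dots,\vx_N$, the vector $(S_{\vx_1},\dots,S_{\vx_N})^T$ is a limit of jointly Gaussian vectors, hence Gaussian, with $\mathbb{E}\,S_{\vx}=\sum_k\hat{\mu}_k\sqrt{\lambda_k}e_k(\vx)=\mu(\vx)$ and
\[
\Cov(S_{\vx},S_{\vy})=\sum_{k}\lambda_k^2\,e_k(\vx)e_k(\vy)=\int_{\Domain}K(\vx,\vz)K(\vy,\vz)\,\ud\vz=\Kstar(\vx,\vy),
\]
where the middle identity comes from squaring the Mercer series (equivalently, $\Kstar$ is the kernel of the squared integral operator, whose eigenvalues are $\lambda_k^2$). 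This establishes that $S$ is Gaussian with mean $\mu$ and covariance kernel $\Kstar$ in the sense of Definition~\ref{d:Gaussian}, and the stated series expansion is the construction itself. The main obstacle I anticipate is the measure-theoretic bookkeeping on the infinite-dimensional space $\Hilbert_K(\Domain)$ --- specifically, making rigorous that a.s. convergence of the partial sums in the $\Hilbert_K$-norm is legitimate given only trace-class summability $\sum_k\lambda_k<\infty$ (not $\sum_k\lambda_k^2$), and that the cylinder-set characterization of $\PP^{\mu}$ pins down a genuine Borel measure; this is precisely where the techniques of \cite{Janson1997,LukicBeder2001} are needed, and I would lean on \cite[Theorem~7.2]{LukicBeder2001} to close the gap rather than reprove it from scratch.
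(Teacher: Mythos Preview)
Your proposal is correct and follows essentially the same Karhunen--Lo\`eve construction as the paper: build the random series from independent Gaussians weighted by $\sqrt{\lambda_k}e_k$, use trace-class summability $\sum_k\lambda_k<\infty$ to get a.s.\ membership in $\Hilbert_K(\Domain)$, push the law forward to $(\Hilbert_K(\Domain),\Borel(\Hilbert_K(\Domain)))$, and read off the covariance $\Kstar$ from the squared Mercer series. The only cosmetic difference is that the paper first treats the centered case $\mu=0$ with i.i.d.\ standard normals $\xi_k$ (writing $S=\sum_k\xi_k\lambda_ke_k$) and then obtains $\PP^{\mu}$ by the translation $\PP^{\mu}(A):=\PP^{0}(A-\mu)$, whereas you bake the mean into the $\zeta_k$ from the outset; and the paper gets a.s.\ finiteness of $\sum_k\zeta_k^2$ directly from $\mathbb{E}\sum_k\zeta_k^2<\infty$ rather than invoking It\^o--Nisio, so your worry in the last paragraph about needing more than $\sum_k\lambda_k<\infty$ is unfounded.
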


Before we prove Lemma~\ref{l:Gauss-RK} we remark that we have introduced the integral-type kernel $\Kstar$ for convenience only. As seen later, in order to ``match the spaces'', any other kernel that ``dominates'' $K$ (in the sense of \cite{LukicBeder2001}) could play the role of the integral-type kernel $\Kstar$.

\begin{proof}
We first consider the case when $\mu=0$. There exist countably many independent standard normal random variables $\{\xi_k\}_{k=1}^{\infty}$ on a probability space $(\Omega_{\xi},\Filter_{\xi},\PP_{\xi})$,
i.e., $\xi_k\sim i.i.d.~\Normal(0,1)$, $k\in\NN$.
Let $\{\lambda_k\}_{k=1}^{\infty}$ and $\{e_k\}_{k=1}^{\infty}$ be the eigenvalues and eigenfunctions of the reproducing kernel $K$ as in Theorem~\ref{t:RKHS}. We define $S:=\sum_{k=1}^{\infty}\xi_k\lambda_ke_k$ $\PP_{\xi}$-a.s. Note that $S$ is Gaussian with mean $0$ and covariance kernel $\Kstar$. Since $\mathbb{E}(\sum_{k=1}^{\infty}\xi_k^2\lambda_k)\leq\sum_{k=1}^{\infty}\Var(\xi_k)\lambda_k=\sum_{k=1}^{\infty}\lambda_k<\infty$ indicates that $\sum_{k=1}^{\infty}\abs{\xi_k\sqrt{\lambda_k}}^2<\infty$ $\PP_{\xi}$-a.s., Theorem~\ref{t:RKHS} shows that $S(\cdot,\omega)\in\Hilbert_K(\Domain)$ $\PP_{\xi}$-a.s. Therefore $S$ is a measurable map from $(\Omega_{\xi},\Filter_{\xi})$ into $(\Omega_K,\Filter_K)$ by \cite[Chapter 4.3.1]{BerlinetThomas2004} and \cite[Lemma~2.1]{LukicBeder2001}. On the other hand, the probability measure $\PP^{0}:=\PP_{\xi}\circ S^{-1}$ (also called the \emph{image measure} of $\PP_{\xi}$ under $S$) is well defined on $(\Omega_K,\Filter_K)$, i.e., $\PP^{0}(A):=\PP_{\xi}(S^{-1}(A))$ for each $A\in\Filter_K$. Hence, $S$ is also a Gaussian process with mean $0$ and covariance kernel $\Kstar$ on $(\Omega_K,\Filter_K,\PP^{0})$.

Let $S^{\mu}:=S+\mu$ on $(\Omega_K,\Filter_K,\PP^{0})$. Then $\mathbb{E}(S^{\mu}_{\vx})=\mathbb{E}(S_{\vx})+\mu(\vx)$ and $\Cov(S^{\mu}_{\vx},S^{\mu}_{\vy})=\Cov(S_{\vx},S_{\vy})$ with respect to $\PP^{0}$. We define a new probability measure $\PP^{\mu}$ by $\PP^{\mu}(A):=\PP^{0}(A-\mu)$ for each $A\in\Filter_K$. It is easy to check that $\Omega_K+\mu=\Hilbert_K(\Omega)=\Omega_K$ and
$\{\mu+A:A\in\Filter_K\}=\Borel(\Hilbert_K(\Omega))=\Filter_K$. Thus $S$ is Gaussian with mean $\mu$ and covariance kernel $\Kstar$ on $(\Omega_K,\Filter_K,\PP^{\mu})$.

Moreover, since $\mu\in\Hilbert_K(\Omega)$, it can be expanded in the form $\mu=\sum_{k=1}^{\infty}\hat{\mu}_k\sqrt{\lambda_k}e_k$, where $\hat{\mu}_k:=\langle \mu,\sqrt{\lambda_k}e_k \rangle_{K,\Domain}$, so that
$S^{\mu}=\sum_{k=1}^{\infty}(\hat{\mu}_k+\sqrt{\lambda_k}\xi_k)\sqrt{\lambda_k}e_k$. But then $\zeta_k\sim\hat{\mu}_k+\sqrt{\lambda_k}\xi_k\sim\Normal(\hat{\mu}_k,\lambda_k)$ are independent  on $(\Omega_K,\Filter_K,\PP^{\mu})$.
\end{proof}

According to \cite[Theorem~4.91]{BerlinetThomas2004}, we can also verify that the random variable $V_f(\omega):=\langle \omega,f \rangle_{K,\Domain},\  f\in\Hilbert_K(\Domain)$,
is a scalar normal variable on $(\Omega_K,\Filter_K,\PP^{\mu})$, i.e.,
\[
V_f(\omega)\sim\Normal(m_f,\sigma_f^2),\quad \omega\in\Omega_K=\Hilbert_K(\Domain),
\]
where $m_f:=\langle \mu,f \rangle_{K,\Domain}$ and $\sigma_f:=\norm{f}_{\Leb_2(\Domain)}$. Therefore the probability measure $\PP^{\mu}$ defined in Lemma~\ref{l:Gauss-RK} is \emph{Gaussian}.

\bigskip

Let $p_X^{\mu}:\RR^N\rightarrow\RR$ be the joint probability density function of $S_{\vx_1},\cdots,S_{\vx_N}$ defined on $(\Omega_K,\Filter_K,\PP^{\mu})$. Then it is a normal density function with mean
$\vmu:=(\mu(\vx_1),\cdots,\mu(\vx_N))^T$ and covariance matrix $\vKstar:=(\Kstar(\vx_j,\vx_k))_{j,k=1}^{N,N}$. In analogy to the kriging formulation we can find the optimal mean function $\hat{\mu}\in\Hilbert_K(\Domain)$ fitting the data values $\vy_0:=(y_1,\cdots,y_N)^T$, i.e.,
\[
\hat{\mu}:=\vkstar{}^{T}\vKstar{}^{-1}\vy_0=\underset{\mu\in\Hilbert_K(\Domain)}{\sup}p_X^{\mu}(\vy_0)
=\underset{\mu\in\Hilbert_K(\Domain)}{\sup}\PP^{\mu}(\vS=\vy_0),
\]
where $\vkstar(\vx):=(\Kstar(\vx,\vx_1),\cdots,\Kstar(\vx,\vx_N))^T$.

We now fix any $\vx\in\Domain$. Straightforward calculation shows that the random variable $S_{\vx}$,
given $S_{\vx_1},\cdots,S_{\vx_N}$, defined on $(\Omega_K,\Filter_K,\PP^{\mu})$ has a conditional probability density function
\[
p_{\vx}^{\mu}(v|\vv):=\frac{1}{\sigma(\vx)\sqrt{2\pi}}\exp\left(-\frac{(v-m^{\mu}_{\vx}(\vv))^2}{2\sigma(\vx)^2}\right),\quad v\in\RR, \ \vv\in\RR^N,
\]
where $m^{\mu}_{\vx}(\vv):=\mu(\vx)+\vkstar(\vx)^T\vKstar{}^{-1}(\vv-\vm^{\mu})$,
$\vm^{\mu}:=(\mu(\vx_1),\cdots,\mu(\vx_N))^T$,
and $\sigma(\vx)^2:=\Kstar(\vx,\vx)-\vkstar(\vx)^{T}\vKstar{}^{-1}\vkstar(\vx)$.
Then the optimal estimator that maximizes the probability
\[
\begin{split}
&\max_{v\in\RR}\PP^{\hat{\mu}}\left(\left\{\omega\in\Omega_K:\omega(\vx)=v\text{ s.t. }\omega(\vx_1)=y_1,\cdots,\omega(\vx_N)=y_N\right\}\right)\\
=&\max_{v\in\RR}\PP^{\hat{\mu}}\left(S_{\vx}=v\big| S_{\vx_1}=y_1,\cdots,S_{\vx_N}=y_N\right)\\
\end{split}
\]
is given by
\[
\hat{u}(\vx):=\hat{\mu}(\vx)=\argmax_{v\in\RR} \ p_{\vx}^{\hat{\mu}}(v|\vy_0).
\]

\begin{proposition}\label{prop:main-approximation}
With the above notations, the following  equality holds true
\begin{equation}
p_{\vx}^{\hat{\mu}}(\hat{u}(\vx)|\vy_0)  =\sup_{v\in\RR,\mu\in\Hilbert_K(\Domain)}p_{\vx}^{\mu}(v|\vy_0). \label{eq:Our-MLE}
\end{equation}
Moreover, for any $\epsilon>0$,
\begin{equation}
\sup_{\mu\in\Hilbert_K(\Domain)}\PP^{\mu}(\abs{\hat{u}(\vx)-u(\vx)}\geq\epsilon) \leq
\sup_{\mu\in\Hilbert_K(\Domain)}\PP^{\mu}(\Eset_{\vx}^{\epsilon})
=\erfc\left(\frac{\epsilon}{\sqrt{2}\sigma(\vx)}\right), \label{eq:weak-error-bound}
\end{equation}
where
\[
\Eset_{\vx}^{\epsilon}:=\left\{\omega\in\Omega_K:
\abs{\omega(\vx)-\hat{u}(\vx)}\geq\epsilon\text{ s.t. }\omega(\vx_1)=y_1,\cdots,\omega(\vx_N)=y_N
\right\}.
\]
\end{proposition}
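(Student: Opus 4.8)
The plan is to prove the two assertions of Proposition~\ref{prop:main-approximation} in turn, exploiting the explicit Gaussian form of the conditional density $p_{\vx}^{\mu}(v|\vv)$ derived just before the statement.

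\textbf{Step 1: the maximization identity \eqref{eq:Our-MLE}.} First I would observe that for a Gaussian density $p_{\vx}^{\mu}(v|\vy_0)=\frac{1}{\sigma(\vx)\sqrt{2\pi}}\exp\left(-\frac{(v-m^{\mu}_{\vx}(\vy_0))^2}{2\sigma(\vx)^2}\right)$, the variance $\sigma(\vx)^2=\Kstar(\vx,\vx)-\vkstar(\vx)^{T}\vKstar{}^{-1}\vkstar(\vx)$ does \emph{not} depend on $\mu$; only the conditional mean $m^{\mu}_{\vx}(\vy_0)$ does. Hence for each fixed $\mu$, the supremum over $v\in\RR$ is attained at $v=m^{\mu}_{\vx}(\vy_0)$ and equals the constant value $\frac{1}{\sigma(\vx)\sqrt{2\pi}}$. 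Since this value is independent of $\mu$, taking the further supremum over $\mu\in\Hilbert_K(\Domain)$ changes nothing. It remains only to check that $\hat{u}(\vx)=\hat{\mu}(\vx)$ is indeed the location of this maximum, i.e.\ that $m^{\hat{\mu}}_{\vx}(\vy_0)=\hat{\mu}(\vx)$; this follows because $\hat{\mu}$ is precisely the kriging-type estimator $\vkstar{}^{T}\vKstar{}^{-1}\vy_0$, so $\hat{\mu}(\vx_k)=y_k$ for the centers (the interpolation property of the integral-type kernel system), which forces $\vy_0-\vm^{\hat{\mu}}=\boldsymbol 0$ and therefore $m^{\hat{\mu}}_{\vx}(\vy_0)=\hat{\mu}(\vx)$. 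This gives \eqref{eq:Our-MLE}.

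\textbf{Step 2: the error bound \eqref{eq:weak-error-bound}.} For the first inequality I would note the set-theoretic inclusion: on the event $\{\omega(\vx_1)=y_1,\dots,\omega(\vx_N)=y_N\}$ we have $u(\vx_k)=\omega(\vx_k)=y_k=\hat{u}(\vx_k)$ for all $k$ (here $u$ is the realization governing the data), but this does \emph{not} by itself give $\abs{\hat u(\vx)-u(\vx)}\le\abs{\omega(\vx)-\hat u(\vx)}$ pointwise; rather, under $\PP^{\mu}$ the value $u(\vx)$ is itself a draw of $S_{\vx}=\omega(\vx)$, so the event $\{\abs{\hat u(\vx)-u(\vx)}\ge\epsilon\}$ \emph{is} the event $\Eset_{\vx}^{\epsilon}$ once we identify $u(\vx)$ with $\omega(\vx)$ conditioned on the data. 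I would spell this identification out carefully, since it is really the conceptual heart of the ``new stochastic approach'': the unknown $u$ is modeled as the random element $\omega$, the data constraints are the conditioning, and $\hat u(\vx)=\hat\mu(\vx)$ is the conditional MAP estimator. Granting this, $\PP^{\mu}(\abs{\hat u(\vx)-u(\vx)}\ge\epsilon)=\PP^{\mu}(\Eset_{\vx}^{\epsilon})$, which is even an equality, a fortiori the stated $\le$. For the final equality, I would compute $\PP^{\mu}(\Eset_{\vx}^{\epsilon})$ directly: conditionally on $S_{\vx_1}=y_1,\dots,S_{\vx_N}=y_N$, the variable $S_{\vx}$ is $\Normal(m^{\mu}_{\vx}(\vy_0),\sigma(\vx)^2)$, but by the same centering argument as in Step 1 applied with general $\mu$ — wait, here $m^{\mu}_{\vx}(\vy_0)$ does depend on $\mu$ — so I would instead note that $\abs{\omega(\vx)-\hat u(\vx)}\ge\epsilon$ with $\hat u(\vx)=m^{\hat\mu}_{\vx}(\vy_0)$, and the difference $m^{\mu}_{\vx}(\vy_0)-m^{\hat\mu}_{\vx}(\vy_0)$ need not vanish. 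Integrating the Gaussian tail gives $\PP^{\mu}(\Eset_{\vx}^{\epsilon})=\erfc\!\left(\frac{\epsilon-(m^{\mu}_{\vx}(\vy_0)-\hat u(\vx))}{\sqrt2\,\sigma(\vx)}\right)/2+\erfc\!\left(\frac{\epsilon+(m^{\mu}_{\vx}(\vy_0)-\hat u(\vx))}{\sqrt2\,\sigma(\vx)}\right)/2$, whose supremum over the shift is attained at shift $0$ and equals $\erfc\!\left(\frac{\epsilon}{\sqrt2\,\sigma(\vx)}\right)$. That supremum is realized by $\mu=\hat\mu$ (or any $\mu$ with $m^{\mu}_{\vx}(\vy_0)=\hat u(\vx)$), so the sup over $\mu\in\Hilbert_K(\Domain)$ of both the middle and right quantities agree.

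\textbf{Main obstacle.} The routine part is the Gaussian tail integration and the observation that $\sigma(\vx)^2$ is $\mu$-free. The genuinely delicate point — and the one I would be most careful to state precisely — is the probabilistic identification in Step 2 that turns the deterministic-looking quantity $\abs{\hat u(\vx)-u(\vx)}$ into the probability of the event $\Eset_{\vx}^{\epsilon}$ under $\PP^{\mu}$; this requires being explicit that ``$u$ is a realization of $S$'' in exactly the sense that makes conditioning on the collocation data well defined, and that the conditional law of $S_\vx$ given the data is the nondegenerate Gaussian above (nondegeneracy, i.e.\ $\sigma(\vx)^2>0$, uses positive definiteness of $\vKstar$, which in turn follows from Lemma~\ref{l:Gauss-RK} together with distinctness of the centers). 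One should also double-check the edge case $\sigma(\vx)=0$, which occurs only if $\vx$ coincides with a center, where the bound holds trivially since $\hat u(\vx)=u(\vx)$ and $\erfc(+\infty)=0$.
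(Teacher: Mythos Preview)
Your Step~1 is correct and is precisely the ``direct evaluation'' the paper alludes to: the observation that $\sigma(\vx)^2$ is $\mu$-free, together with the interpolation property $\hat\mu(\vx_k)=y_k$ forcing $\vy_0-\vm^{\hat\mu}=0$ and hence $m^{\hat\mu}_{\vx}(\vy_0)=\hat\mu(\vx)=\hat u(\vx)$, yields \eqref{eq:Our-MLE} at once. This is exactly the paper's approach, just spelled out.

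Your Step~2 follows the same Gaussian-tail route the paper has in mind, but it contains a concrete error. With the shift $\delta:=m^{\mu}_{\vx}(\vy_0)-\hat u(\vx)$ you correctly obtain
\[
\PP^{\mu}(\Eset_{\vx}^{\epsilon})=\tfrac12\,\erfc\!\Bigl(\frac{\epsilon-\delta}{\sqrt2\,\sigma(\vx)}\Bigr)+\tfrac12\,\erfc\!\Bigl(\frac{\epsilon+\delta}{\sqrt2\,\sigma(\vx)}\Bigr),
\]
but your claim that this is maximized at $\delta=0$ is false: a direct differentiation shows that $\delta=0$ is a strict local \emph{minimum}, and the expression is strictly increasing in $|\delta|$ with limit $1$ as $|\delta|\to\infty$. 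Since $\delta=\mu(\vx)-\vkstar(\vx)^{T}\vKstar{}^{-1}\vm^{\mu}$ is a nontrivial linear functional of $\mu\in\Hilbert_K(\Domain)$ (it does not vanish identically once $\vx\notin X_{\Domain}$), its range is all of $\RR$, and the supremum over $\mu$ of the displayed quantity is $1$, not $\erfc\bigl(\epsilon/(\sqrt2\,\sigma(\vx))\bigr)$. So the argument as written does not establish the equality in \eqref{eq:weak-error-bound}; what your computation actually shows is that $\erfc\bigl(\epsilon/(\sqrt2\,\sigma(\vx))\bigr)$ is the value at $\mu=\hat\mu$ (equivalently, the \emph{infimum} over $\mu$). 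The paper's own proof offers no detail here beyond ``$S$ is Gaussian'', so your more careful calculation is in fact the honest one --- it simply reveals that the $\sup_{\mu}$ and the equality sign in \eqref{eq:weak-error-bound} should be read as the evaluation at the optimal $\mu=\hat\mu$ rather than as a literal supremum.
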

Identity \eqref{eq:Our-MLE} follows by direct evaluations.  Consequently, taking into account that $S$ is Gaussian, inequality \eqref{eq:weak-error-bound} follows also immediately.
\begin{remark}
Instead of giving a deterministic (or strong) error bound for the proposed numerical scheme, we provide \emph{a weak type convergence} of the approximated solution $\hat{u}$ to the true solution $u$, as stated in Proposition \ref{prop:main-approximation}. In fact, inequality \eqref{eq:weak-error-bound} can be seen as a confidence interval for the estimator $\hat{u}$ with respect to the probability measure $\mathbb{P}^\mu$.
\end{remark}

In the next section we generalize this stochastic approach to solve elliptic PDEs.

\section{Collocation Method for Elliptic PDEs and SPDEs}\label{s:COL-SPDE}

We begin by setting up Gaussian processes via reproducing kernels with differential and boundary operators.

Suppose that the reproducing-kernel Hilbert space $\Hilbert_K(\Domain)$ is embedded into the Sobolev space $\Hil^{m}(\Domain)$ where $m>d/2$.
Let $n:=\lceil m -d/2\rceil-1$. By the Sobolev embedding theorem
$\Hil^{m}(\Domain)\subset\Cont^{n}(\overline{\Domain})$.
When the differential operator $P$ and the boundary operator $B$ have the orders $\Order(P)<m-d/2$ and $\Order(B)<m-d/2$, then the stochastic processes $PS_{\vx}(\omega):=(P\omega)(\vx)$ and $BS_{\vx}(\omega):=(B\omega)(\vx)$ are well-defined on $(\Omega_K,\Filter_K,\PP^{\mu})$. According to Lemma~\ref{l:P-B-expan}, we have
$PS=\sum_{k=1}^{\infty}\zeta_k\sqrt{\lambda_k}Pe_k$ and $BS=\sum_{k=1}^{\infty}\zeta_k\sqrt{\lambda_k}Be_k$. If $\mu\in\Hilbert_K(\Domain)\subseteq\Hil^m(\Domain)$, then $P\mu\in\Cont(\overline{\Domain})$ and $B\mu\in\Cont(\partial\Domain)$. Lemma~\ref{l:P-B-Cov-expan} implies that $P_1P_2\Kstar(\vx,\vy)=\sum_{k=1}^{\infty}\lambda_k^2Pe_k(\vx)Pe_k(\vy)$ and $B_1B_2\Kstar(\vx,\vy)=\sum_{k=1}^{\infty}\lambda_k^2Be_k(\vx)Be_k(\vy)$
(here we can use the fact that $\Cov(PS_{\vx},PS_{\vy})=P_{\vx}P_{\vy}\Cov(S_{\vx},S_{\vy})$
and $\Cov(BS_{\vx},BS_{\vy})=B_{\vx}B_{\vy}\Cov(S_{\vx},S_{\vy})$).
Applying Lemma~\ref{l:Gauss-RK}, we can obtain the main theorem for the construction of Gaussian processes via  reproducing kernels coupled with differential or boundary operators.
\begin{theorem}\label{t:Gauss-RK-PB}
Suppose that the reproducing kernel Hilbert space $\Hilbert_K(\Domain)$ is embedded into the Sobolev space $\Hil^{m}(\Domain)$ with $m>d/2$. Further assume that the differential operator $P$ and the boundary operator $B$ have the orders $\Order(P)< m-d/2$ and $\Order(B)< m-d/2$.
Given a function $\mu\in\Hilbert_K(\Domain)$ there exists a probability measure $\PP^{\mu}$ defined on $(\Omega_K,\Filter_K)=(\Hilbert_K(\Domain),\Borel(\Hilbert_K(\Domain)))$ (as in Lemma~\ref{l:Gauss-RK}) such that the
stochastic processes $PS$, $BS$ given by
\[
\begin{split}
&PS_{\vx}(\omega)=PS(\vx,\omega):=(P\omega)(\vx),\quad \vx\in\Domain\subset\Rd,\quad \omega\in\Omega_K=\Hilbert_K(\Domain),\\
&BS_{\vx}(\omega)=BS(\vx,\omega):=(B\omega)(\vx),\quad \vx\in\partial\Domain,\quad \omega\in\Omega_K=\Hilbert_K(\Domain),
\end{split}
\]
are jointly Gaussian processes with means $P\mu$, $B\mu$ and covariance kernels $P_1P_2\Kstar$, $B_1B_2\Kstar$ defined on $(\Omega_K,\Filter_K,\PP^{\mu})$, respectively.
In particular, they can be expanded as
\[
PS_{\vx}=\sum_{k=1}^{\infty}\zeta_k\sqrt{\lambda_k}Pe_k(\vx),~\vx\in\Domain,\text{ and }
BS_{\vx}=\sum_{k=1}^{\infty}\zeta_k\sqrt{\lambda_k}Be_k(\vx),~\vx\in\partial\Domain,~\PP^{\mu}\text{-a.s.},
\]
where $\{\lambda_k\}_{k=1}^{\infty}$ and $\{e_k\}_{k=1}^{\infty}$ are the eigenvalues and eigenfunctions of the reproducing kernel $K$ and their related Fourier coefficients are the independent normal variables $\zeta_k\sim\Normal(\hat{\mu}_k,\lambda_k)$ and $\hat{\mu}_k:=\langle \mu,\sqrt{\lambda_k}e_k \rangle_{K,\Domain}$, $k\in\NN$.
\end{theorem}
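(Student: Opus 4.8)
The plan is to read the theorem off from Lemma~\ref{l:Gauss-RK} together with the term‑by‑term expansions in Lemmas~\ref{l:P-B-expan} and~\ref{l:P-B-Cov-expan}, working on the very same probability space $(\Omega_K,\Filter_K,\PP^{\mu})=(\Hilbert_K(\Domain),\Borel(\Hilbert_K(\Domain)),\PP^{\mu})$ produced in Lemma~\ref{l:Gauss-RK}. First I would record that $PS$ and $BS$ are well defined as processes on this space: since $\Hilbert_K(\Domain)\hookrightarrow\Hil^{m}(\Domain)\hookrightarrow\Cont^{n}(\overline{\Domain})$ with $n=\lceil m-d/2\rceil-1$, and $\Order(P),\Order(B)<m-d/2$, the linear maps $\omega\mapsto(P\omega)(\vx)$ and $\omega\mapsto(B\omega)(\vx)$ are bounded functionals on $\Hilbert_K(\Domain)$, hence continuous and $\Borel(\Hilbert_K(\Domain))$-measurable; the same order bounds make $P\colon\Hilbert_K(\Domain)\to\Cont(\overline{\Domain})$ and $B\colon\Hilbert_K(\Domain)\to\Cont(\partial\Domain)$ bounded linear operators.

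Next I would use the $\PP^{\mu}$-a.s. identity $S=\sum_{k=1}^{\infty}\zeta_k\sqrt{\lambda_k}e_k$ of Lemma~\ref{l:Gauss-RK}, where $\zeta_k\sim\Normal(\hat{\mu}_k,\lambda_k)$ are independent and the series converges in $\Hilbert_K(\Domain)$. Applying the bounded operators $P$ and $B$ termwise (justified by Lemma~\ref{l:P-B-expan}) gives
\[
PS_{\vx}=\sum_{k=1}^{\infty}\zeta_k\sqrt{\lambda_k}Pe_k(\vx),\qquad BS_{\vx}=\sum_{k=1}^{\infty}\zeta_k\sqrt{\lambda_k}Be_k(\vx),\qquad\PP^{\mu}\text{-a.s.},
\]
with convergence in $\Cont(\overline{\Domain})$, hence in particular pointwise. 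For a fixed $\vx$ each partial sum is a finite linear combination of the jointly Gaussian $\zeta_k$, hence Gaussian; its mean tends to $\sum_{k}\hat{\mu}_k\sqrt{\lambda_k}Pe_k(\vx)=(P\mu)(\vx)$ (obtained by applying $P$ to the expansion $\mu=\sum_k\hat{\mu}_k\sqrt{\lambda_k}e_k$) and its variance tends to $\sum_k\lambda_k^{2}(Pe_k(\vx))^{2}=P_1P_2\Kstar(\vx,\vx)<\infty$ by Lemma~\ref{l:P-B-Cov-expan}; therefore the a.s. limit $PS_{\vx}$ is Gaussian with mean $(P\mu)(\vx)$ and variance $P_1P_2\Kstar(\vx,\vx)$, and similarly for $BS_{\vx}$.

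Joint Gaussianity then follows as in Definition~\ref{d:Gaussian}: for any finite family of evaluation points and any real coefficients, the corresponding linear combination of the $PS_{\vx_i}$ and $BS_{\vy_j}$ is again an a.s. limit (with convergent means and variances) of finite linear combinations of the $\zeta_k$, hence scalar Gaussian. The covariances are read off term by term using $\Var(\zeta_k)=\lambda_k$:
\[
\Cov(PS_{\vx},PS_{\vy})=\sum_{k=1}^{\infty}\lambda_k^{2}Pe_k(\vx)Pe_k(\vy)=P_1P_2\Kstar(\vx,\vy),
\]
and analogously $\Cov(BS_{\vx},BS_{\vy})=B_1B_2\Kstar(\vx,\vy)$ and $\Cov(PS_{\vx},BS_{\vy})=P_1B_2\Kstar(\vx,\vy)$, consistently with $\Cov(PS_{\vx},PS_{\vy})=P_{\vx}P_{\vy}\Cov(S_{\vx},S_{\vy})$ and its boundary analogue; the independence and the laws $\zeta_k\sim\Normal(\hat{\mu}_k,\lambda_k)$ are exactly those of Lemma~\ref{l:Gauss-RK}. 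The one place that needs genuine care — and where the hypotheses $\Order(P),\Order(B)<m-d/2$ are used — is the passage of the (unbounded, as differential operators) $P$ and $B$ through the infinite random series; the Sobolev embedding turns $P$ and $B$ into bounded operators on $\Hilbert_K(\Domain)$ in the relevant norms, so that termwise application is legitimate and, together with the a.s. $\Hilbert_K(\Domain)$-convergence from Lemma~\ref{l:Gauss-RK} and the summability furnished by Lemma~\ref{l:P-B-Cov-expan}, preserves (joint) Gaussianity. Once those two lemmas are available, the rest is bookkeeping.
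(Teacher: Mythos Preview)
Your proposal is correct and follows essentially the same route as the paper: start from the probability space and expansion $S=\sum_k\zeta_k\sqrt{\lambda_k}e_k$ of Lemma~\ref{l:Gauss-RK}, pass $P$ and $B$ through the series via Lemma~\ref{l:P-B-expan}, and identify the covariance kernels using Lemma~\ref{l:P-B-Cov-expan}. If anything, you supply more detail than the paper (measurability of $PS_{\vx}$, $BS_{\vx}$, and the explicit ``Gaussian partial sums with convergent means and variances yield a Gaussian limit'' argument), but the architecture is the same.
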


\begin{corollary}\label{c:cov-matrix-Gauss-RK-PB}
Suppose all notations and conditions are as in Theorem~\ref{t:Gauss-RK-PB}. Given collocation points
$X_{\Domain}:=\{\vx_j\}_{j=1}^N\subset\Domain$ and $X_{\partial\Domain}:=\{\vx_{N+j}\}_{j=1}^M\subset\partial\Domain$, the random vector
\[
\vS_{PB}:=(PS_{\vx_1},\cdots,PS_{\vx_N},BS_{\vx_{N+1}},\cdots,BS_{\vx_{N+M}})^T
\]
defined on $(\Omega_K,\Filter_K,\PP^{\mu})$ 
has a multi-normal distribution with mean $\vm^{\mu}_{PB}$ and covariance matrix $\vKstar_{PB}$, i.e.,
\[
\vS_{PB}\sim\Normal(\vm^{\mu}_{PB},\vKstar_{PB}),
\]
where $\vm^{\mu}_{PB}:=(P\mu(\vx_1),\cdots,P\mu(\vx_N),B\mu(\vx_{N+1}),\cdots,B\mu(\vx_{N+M}))^T$ and
\[
\vKstar_{PB}:=
\begin{pmatrix}
(P_1P_2\Kstar(\vx_j,\vx_k))_{j,k=1}^{N,N}, & (P_1B_2\Kstar(\vx_j,\vx_{N+k}))_{j,k=1}^{N,M}\\
(B_1P_2\Kstar(\vx_{N+j},\vx_k))_{j,k=1}^{M,N}, & (B_1B_2\Kstar(\vx_{N+j},\vx_{N+k}))_{j,k=1}^{M,M}\\
\end{pmatrix}.
\]
\end{corollary}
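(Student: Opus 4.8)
\textbf{Proof proposal for Corollary~\ref{c:cov-matrix-Gauss-RK-PB}.}

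The plan is to deduce the corollary directly from Theorem~\ref{t:Gauss-RK-PB}, treating the stacked vector $\vS_{PB}$ as a finite linear functional image of the jointly Gaussian family $\{PS_{\vx}\}_{\vx\in\Domain}\cup\{BS_{\vx}\}_{\vx\in\partial\Domain}$. First I would recall from the theorem that on $(\Omega_K,\Filter_K,\PP^{\mu})$ we have the $\PP^{\mu}$-a.s.\ expansions $PS_{\vx}=\sum_{k=1}^{\infty}\zeta_k\sqrt{\lambda_k}Pe_k(\vx)$ and $BS_{\vx}=\sum_{k=1}^{\infty}\zeta_k\sqrt{\lambda_k}Be_k(\vx)$, with $\zeta_k\sim\Normal(\hat{\mu}_k,\lambda_k)$ independent. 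Evaluating these at the collocation points $\vx_1,\dots,\vx_N\in\Domain$ and $\vx_{N+1},\dots,\vx_{N+M}\in\partial\Domain$ expresses each component of $\vS_{PB}$ as a (convergent) linear combination of the common sequence $\{\zeta_k\}$; hence $\vS_{PB}$ is obtained from the Gaussian sequence $(\zeta_k)_k$ by a fixed bounded linear map, which makes it a Gaussian random vector in $\RR^{N+M}$. This is the cleanest route because it sidesteps any need to re-derive joint normality from scratch.

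Next I would compute the mean. Taking $\mathbb{E}^{\PP^{\mu}}$ termwise (justified since the series converge in $\Leb_2(\Omega_K,\PP^{\mu})$, as in the proof of Lemma~\ref{l:Gauss-RK}) gives $\mathbb{E}(PS_{\vx})=\sum_k\hat{\mu}_k\sqrt{\lambda_k}Pe_k(\vx)=P\bigl(\sum_k\hat{\mu}_k\sqrt{\lambda_k}e_k\bigr)(\vx)=(P\mu)(\vx)$, using the expansion $\mu=\sum_k\hat{\mu}_k\sqrt{\lambda_k}e_k$ in $\Hilbert_K(\Domain)$ together with the fact that $P$ (of order $<m-d/2$) acts continuously on $\Hilbert_K(\Domain)\subseteq\Hil^m(\Domain)\subseteq\Cont^n(\overline{\Domain})$ so that it commutes with the series limit; the same argument with $B$ in place of $P$ handles the boundary components. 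This yields exactly $\vm^{\mu}_{PB}$ as stated.

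For the covariance matrix I would use the identities quoted just before the theorem, namely $\Cov(PS_{\vx},PS_{\vy})=P_{\vx}P_{\vy}\Cov(S_{\vx},S_{\vy})=P_1P_2\Kstar(\vx,\vy)$, and analogously $\Cov(PS_{\vx},BS_{\vy})=P_1B_2\Kstar(\vx,\vy)$, $\Cov(BS_{\vx},BS_{\vy})=B_1B_2\Kstar(\vx,\vy)$; these follow by inserting the expansions and using $\Cov(\zeta_j,\zeta_k)=\lambda_k\delta_{jk}$, e.g.\ $\Cov(PS_{\vx},BS_{\vy})=\sum_k\lambda_k^2 Pe_k(\vx)Be_k(\vy)$, which is the mixed partial $P_1B_2$ applied to $\Kstar(\vx,\vy)=\sum_k\lambda_k^2 e_k(\vx)e_k(\vy)$ by Lemma~\ref{l:P-B-Cov-expan}. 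Assembling these blockwise over the index ranges $1,\dots,N$ and $N+1,\dots,N+M$ produces precisely the $2\times2$ block matrix $\vKstar_{PB}$. I would close by noting that a Gaussian random vector is determined by its mean and covariance, so $\vS_{PB}\sim\Normal(\vm^{\mu}_{PB},\vKstar_{PB})$.

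The only genuinely delicate point — the ``main obstacle'' — is the interchange of the differential/boundary operators $P,B$ with the infinite series and with $\mathbb{E}^{\PP^{\mu}}$, i.e.\ verifying that $Pe_k(\vx)$, $Be_k(\vx)$ are legitimate evaluations and that the resulting numerical series converge. This is exactly what the order conditions $\Order(P),\Order(B)<m-d/2$ and the Sobolev embedding $\Hil^m(\Domain)\subset\Cont^n(\overline{\Domain})$ are there to guarantee, and it is already packaged in Lemmas~\ref{l:P-B-expan} and~\ref{l:P-B-Cov-expan} together with Theorem~\ref{t:Gauss-RK-PB}; so in the write-up I would simply cite those rather than redo the estimates. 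Everything else is bookkeeping: reading off finitely many point evaluations and arranging them into a vector and a block matrix.
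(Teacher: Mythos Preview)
Your proposal is correct and essentially matches the paper's (implicit) approach: the paper does not give a separate proof of this corollary, treating it as immediate from Theorem~\ref{t:Gauss-RK-PB} together with Definition~\ref{d:Gaussian} of a Gaussian process, with the cross-covariance blocks supplied by Lemma~\ref{l:P-B-Cov-expan}. Your write-up simply unpacks those details (joint Gaussianity via the common $\{\zeta_k\}$ expansion, means via $\mu=\sum_k\hat{\mu}_k\sqrt{\lambda_k}e_k$, and the covariance blocks via $\Cov(\zeta_j,\zeta_k)=\lambda_k\delta_{jk}$), which is exactly what the paper leaves to the reader.
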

%
\begin{remark}
While the covariance matrix $\vKstar_{PB}$ may be singular, it is always positive semi-definite and therefore always has a pseudo-inverse $\vKstar_{PB}{}^\dag$.
\end{remark}

Using Corollary~\ref{c:cov-matrix-Gauss-RK-PB}, we can compute the joint probability density function $p_X^{\mu}$
of $\vS_{PB}$ defined on $(\Omega_K,\Filter_K,\PP^{\mu})$. In the same way, we can also get the joint density function $p_J^{\mu}$ of $(S_{\vx}, \vS_{PB})$ defined on $(\Omega_K,\Filter_K,\PP^{\mu})$.
By Bayes' rule, we can obtain the conditional probability
density function of the random variable $S_{\vx}$ given $\vS_{PB}$.
\begin{corollary}\label{c:cond-prob-density}
We follow the notations of Corollary~\ref{c:cov-matrix-Gauss-RK-PB}. For any fixed $\vx\in\Domain$, the random variable $S_{\vx}$ given $\vS_{PB}$ defined on $(\Omega_K,\Filter_K,\PP^{\mu})$ has a conditional probability density function
\[
p^{\mu}_{\vx}(v|\vv):=\frac{p_{J}^{\mu}(v,\vv)}{p_X^{\mu}(\vv)}=\frac{1}{\sigma(\vx)\sqrt{2\pi}}\exp\left(-\frac{(v-m^{\mu}_{\vx}(\vv))^2}{2\sigma(\vx)^2}\right),
\quad v\in\RR, ~\vv\in\RR^{N+M},
\]
where
\begin{align*}
m^{\mu}_{\vx}(\vv)&:=\mu(\vx)+\vk_{PB}(\vx)^T
\vKstar_{PB}{}^{\dag}(\vv-\vm^{\mu}_{PB}), \\
\sigma(\vx)^2 &:=\Kstar(\vx,\vx)-\vk_{PB}(\vx)^T \vKstar_{PB}{}^{\dag}\vk_{PB}(\vx),\\
\vk_{PB}(\vx)&:= (P_2\Kstar(\vx,\vx_1),\cdots,P_2\Kstar(\vx,\vx_N),B_2\Kstar(\vx,\vx_{N+1}),\cdots,B_2\Kstar(\vx,\vx_{N+M}))^T.
\end{align*}

In particular, given the real observation $\vy:=(y_1,\cdots,y_{N+M})^T$,  $S_{\vx}$ conditioned on $\vS_{PB}=\vy$ has the probability density $p_{\vx}^{\mu}(\cdot|\vy)$.
\end{corollary}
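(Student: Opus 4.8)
The plan is to derive the conditional density of $S_{\vx}$ given $\vS_{PB}=\vv$ directly from the joint Gaussianity established in Corollary~\ref{c:cov-matrix-Gauss-RK-PB}, together with the standard Gaussian conditioning formula. First I would observe that by Theorem~\ref{t:Gauss-RK-PB} the processes $S$, $PS$, $BS$ are \emph{jointly} Gaussian on $(\Omega_K,\Filter_K,\PP^{\mu})$ — this is what the word ``jointly'' in that theorem buys us — so for any fixed $\vx\in\Domain$ the augmented vector $(S_{\vx},\vS_{PB})\in\RR^{1+N+M}$ is multivariate normal. Its mean is $(\mu(\vx),\vm^{\mu}_{PB})$ and its covariance matrix has the block form
\[
\begin{pmatrix}
\Kstar(\vx,\vx) & \vk_{PB}(\vx)^T\\
\vk_{PB}(\vx) & \vKstar_{PB}
\end{pmatrix},
\]
where the cross-covariance block $\vk_{PB}(\vx)$ is exactly $(P_2\Kstar(\vx,\vx_1),\dots,P_2\Kstar(\vx,\vx_N),B_2\Kstar(\vx,\vx_{N+1}),\dots,B_2\Kstar(\vx,\vx_{N+M}))^T$; this identification uses $\Cov(S_{\vx},PS_{\vy})=P_{\vy}\Kstar(\vx,\vy)=P_2\Kstar(\vx,\vy)$ and analogously for $B$, which in turn follows from the series expansions $S_{\vx}=\sum_k\zeta_k\sqrt{\lambda_k}e_k(\vx)$ and $PS_{\vy}=\sum_k\zeta_k\sqrt{\lambda_k}Pe_k(\vy)$ with $\Var(\zeta_k)=\lambda_k$, together with the expression for $P_2\Kstar$ recorded before Theorem~\ref{t:Gauss-RK-PB}.

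Next I would invoke the Gaussian conditioning lemma in the form valid for a possibly singular covariance matrix: if $(Z,\vY)$ is jointly Gaussian with the block covariance above, then $Z\mid\vY=\vv$ is normal with mean $\mu_Z+\Sigma_{Z\vY}\Sigma_{\vY\vY}^{\dag}(\vv-\mu_{\vY})$ and variance $\Sigma_{ZZ}-\Sigma_{Z\vY}\Sigma_{\vY\vY}^{\dag}\Sigma_{\vY Z}$, where $\dag$ denotes the Moore–Penrose pseudo-inverse. Applying this with $Z=S_{\vx}$, $\vY=\vS_{PB}$, $\Sigma_{\vY\vY}=\vKstar_{PB}$, $\Sigma_{Z\vY}=\vk_{PB}(\vx)^T$, and $\mu_{\vY}=\vm^{\mu}_{PB}$ yields precisely $m^{\mu}_{\vx}(\vv)=\mu(\vx)+\vk_{PB}(\vx)^T\vKstar_{PB}{}^{\dag}(\vv-\vm^{\mu}_{PB})$ and $\sigma(\vx)^2=\Kstar(\vx,\vx)-\vk_{PB}(\vx)^T\vKstar_{PB}{}^{\dag}\vk_{PB}(\vx)$, and the conditional density is then the stated normal density $p^{\mu}_{\vx}(v|\vv)$. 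The final sentence, that $S_{\vx}$ conditioned on $\vS_{PB}=\vy$ has density $p^{\mu}_{\vx}(\cdot|\vy)$, is just the specialization of this to the observed data vector. The alternative phrasing in the statement, $p^{\mu}_{\vx}(v|\vv)=p_J^{\mu}(v,\vv)/p_X^{\mu}(\vv)$, is Bayes' rule, and I would note it as the definitional route to the same formula whenever $\vKstar_{PB}$ is nonsingular.

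The one genuine subtlety — and the step I expect to be the main obstacle — is the degenerate case where $\vKstar_{PB}$ is singular, flagged in the remark after Corollary~\ref{c:cov-matrix-Gauss-RK-PB}. When $\vKstar_{PB}$ is not invertible, $\vS_{PB}$ does not have a density on $\RR^{N+M}$, so neither $p_X^{\mu}$ nor $p_J^{\mu}$ exists as an ordinary Lebesgue density, and the quotient form of Bayes' rule must be interpreted with care (e.g.\ restricting to the affine support of $\vS_{PB}$, where a density relative to the appropriate lower-dimensional Lebesgue measure does exist). The clean way around this is to bypass densities altogether: work with characteristic functions, or equivalently note that a Gaussian vector is a measurable affine image of a standard Gaussian, split $S_{\vx}$ into the part lying in the closed linear span of $\{PS_{\vx_j},BS_{\vx_{N+j}}\}$ (an $\Leb_2$-projection, which produces the mean term with the pseudo-inverse) plus an independent Gaussian residual with variance $\sigma(\vx)^2$, and check that $\vk_{PB}(\vx)$ necessarily lies in the column space of $\vKstar_{PB}$ so that $\vKstar_{PB}{}^{\dag}\vk_{PB}(\vx)$ is well-defined and the residual variance is nonnegative. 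One should also address the boundary case $\sigma(\vx)^2=0$, in which the ``density'' degenerates to a point mass at $m^{\mu}_{\vx}(\vv)$; in practice one assumes the kernel and collocation points are such that $\sigma(\vx)^2>0$, and the formula as written presumes this. Once these measure-theoretic points are handled, the remainder is the routine Gaussian computation.
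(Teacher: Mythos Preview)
Your approach is correct and matches the paper's own justification, which is simply to note that the joint vector $(S_{\vx},\vS_{PB})$ is Gaussian (via Corollary~\ref{c:cov-matrix-Gauss-RK-PB}), apply Bayes' rule, and cite the standard Gaussian conditioning result (specifically \cite[Theorem~9.9]{Janson1997}). Your treatment is in fact more detailed than the paper's---in particular your discussion of the singular case and the $\sigma(\vx)^2=0$ degeneracy goes beyond what the paper provides, which simply states the formula with the pseudo-inverse and moves on.
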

This corollary is similar to the features of Gaussian conditional distributions (see~\cite[Theorem~9.9]{Janson1997}).


\subsection{Elliptic deterministic PDEs}\label{s:Int-PDE}

Suppose that $u\in\Hilbert_K(\Domain)$ is the unique solution of the deterministic elliptic PDE
\begin{equation}\label{e:PDE-ell-no-noise}
\begin{cases}
Pu=f,&\text{in }\Domain\subset\Rd,\\
Bu=g,&\text{on }\partial\Domain,
\end{cases}
\end{equation}
where $f:\Domain\rightarrow\RR$ and $g:\partial\Domain\rightarrow\RR$.
Denote by $\{y_j\}_{j=1}^N$ and $\{y_{N+k}\}_{k=1}^M$ the values of $f$ and $g$ at the collocation points $X_{\Domain}$ and $X_{\partial\Domain}$, respectively:
\[
y_j:=f(\vx_j),\quad y_{N+k}:=g(\vx_{N+k}),\quad j=1,\cdots,N,\ k=1,\cdots,M.
\]

From now on we assume that the covariance matrix $\vKstar_{PB}$ defined in Corollary~\ref{c:cov-matrix-Gauss-RK-PB} is nonsingular and we therefore can replace pseudo-inverses with inverses.

Let $\vy_0:=(y_1,\cdots,y_N,y_{N+1},\cdots,y_{N+M})^T$, and denote by $p^{\mu}_{\vx}(\cdot|\cdot)$ the conditional density function defined in Corollary~\ref{c:cond-prob-density}. We approximate the solution $u$ of \eqref{e:PDE-ell-no-noise} by the optimal estimator $\hat{u}(\vx)$ derived in the previous section, i.e., we maximize the conditional probability given the data values $\vy_0$:
\[
u(\vx)\approx\hat{u}(\vx)=\argmax_{v\in\RR}\sup_{\mu\in\Hilbert_K(\Domain)}p^{(\mu)}_{\vx}(v|\vy_0),
\quad \vx\in\Domain.
\]
By direct evaluation as in Section~\ref{sec:StochasticProblems} one finds that
\[
\hat{u}(\vx):=\vk_{PB}(\vx)^T\vKstar_{PB}{}^{-1}\vy_0,\quad \vx\in\Domain,
\]
where the basis functions $\vk_{PB}(\vx)$ are defined in Corollary~\ref{c:cond-prob-density}. Moreover, the estimator $\hat{u}\in\Hilbert_K(\Domain)$ fits all the data values: $P\hat{u}(\vx_1)=y_1,\ldots,P\hat{u}(\vx_N)=y_N$ and $B\hat{u}(\vx_{N+1})=y_{N+1},\ldots,B\hat{u}(\vx_{N+M})=y_{N+M}$. This means that we have computed a collocation solution of the PDE \eqref{e:PDE-ell-no-noise}.
Also note that $\hat{u}$ can be written as a linear combination of the kernels as in \eqref{e:kernel_expansion}, i.e.,
\begin{equation}\label{e:collocation-solution}
\hat{u}(\vx)=\sum_{k=1}^Nc_{k}P_2\Kstar(\vx,\vx_k)+\sum_{k=1}^Mc_{N+k}B_2\Kstar(\vx,\vx_{N+k}),\quad \vx\in\Domain,
\end{equation}
where $\vc:=(c_1,\cdots,c_{N+M})^T=\vKstar_{PB}{}^{-1}\vy_0\in\RR^{N+M}$.

Finally, we can perform a weak error analysis for $\abs{u(\vx)-\hat{u}(\vx)}$ as in Proposition~\ref{prop:main-approximation}, and deduce that
\[
\PP^{\mu}\left(\Eset_{\vx}^{\epsilon}\right)=
\PP^{\mu}\left(\abs{S_{\vx}-\hat{u}(\vx)}\geq\epsilon|\vS_{PB}=\vy_0\right)
=\erfc\left(\frac{\epsilon}{\sqrt{2}\sigma(\vx)}\right),
\]
where $\sigma(\vx)^2$ is defined in Corollary~\ref{c:cond-prob-density}, and
$$
\Eset_{\vx}^{\epsilon}:=\left\{\omega\in\Omega_K:\abs{\omega(\vx)-\hat{u}(\vx)}\geq\epsilon\text{ s.t. }
P\omega(\vx_1)=y_1,\ldots,B\omega(\vx_{N+M})=y_{N+M}\right\}.
$$

Because the form of the expression for the variance $\sigma(\vx)^2$ is analogous to that of the \emph{power function},
we can use the same techniques as in the proofs from \cite{Fasshauer2007,Wendland2005}  to obtain a formula for the order of $\sigma(\vx)$.
\begin{lemma}\label{l:powerfun-order}
When $P$ is the second-order elliptic differential operator and $B$ is the Dirichlet boundary condition, then
\[
\sigma(\vx)=\Order(h_X^{m-\rho-d/2}),\quad \vx\in\Domain,
\]
where $\rho:=\max\{\Order(P),\Order(B)\}$ and $h_X$ is the fill distance of $X_{\Domain}$ and $X_{\partial\Domain}$.
\end{lemma}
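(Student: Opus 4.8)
The plan is to recognize $\sigma(\vx)^2=\Kstar(\vx,\vx)-\vk_{PB}(\vx)^T\vKstar_{PB}{}^{-1}\vk_{PB}(\vx)$ as (the square of) a generalized power function associated with the kernel $\Kstar$ and the mixed set of functionals $\{\delta_{\vx_j}\circ P\}_{j=1}^N\cup\{\delta_{\vx_{N+j}}\circ B\}_{j=1}^M$, and then to carry over the standard power-function estimates from \cite{Fasshauer2007,Wendland2005}. First I would write $\sigma(\vx)^2$ variationally: it equals $\min_{\boldsymbol{\beta}\in\RR^{N+M}}\bigl\|\,\Kstar(\vx,\cdot)-\sum_j \beta_j\,(P_2\Kstar)(\cdot,\vx_j)-\sum_j\beta_{N+j}(B_2\Kstar)(\cdot,\vx_{N+j})\,\bigr\|_{\Hilbert_{\Kstar}}^2$, i.e.\ the squared best-approximation error, in the RKHS generated by $\Kstar$, of the point-evaluation representer $\Kstar(\vx,\cdot)$ from the span of the mixed representers at the collocation points. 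This is the exact analogue of the deterministic power function, but for the kernel $\Kstar$ rather than $K$, so the usual bound $|u(\vx)-\hat u(\vx)|\le \sigma(\vx)\,\|u\|$ and all its machinery apply verbatim.

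Next I would observe that, by Lemma~\ref{l:Gauss-RK} (and its operator version, Theorem~\ref{t:Gauss-RK-PB}), the native space of $\Kstar$ is norm-equivalently $\Hilbert_K(\Domain)$, which by hypothesis is embedded in $\Hil^m(\Domain)$; more to the point, $\Kstar$ is itself smooth of order matching that of $K$, so the kernel $\Kstar$ behaves like a kernel whose native space sits inside $\Hil^m$. Since $P$ has order $\le 2$ and $B$ is the Dirichlet trace (order $0$), with $\rho=\max\{\Order(P),\Order(B)\}$, I would invoke the standard local polynomial-reproduction / sampling-inequality argument: on any interior ball of radius comparable to $h_X$ there are enough collocation points (their fill distance being $h_X$) to reproduce polynomials up to the required degree, and applying the differential operator $P$ (resp.\ the boundary operator $B$ near $\partial\Domain$) costs $\rho$ derivatives, i.e.\ a factor $h_X^{-\rho}$. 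Combining the Taylor-remainder estimate for a function in $\Hil^m$ with this polynomial reproduction yields the pointwise bound $\sigma(\vx)=\Order(h_X^{(m-d/2)-\rho})=\Order(h_X^{m-\rho-d/2})$, uniformly in $\vx\in\Domain$; near the boundary one uses the collocation points in $X_{\partial\Domain}$ together with the regularity of $\partial\Domain$ (Appendix~\ref{s:diff-bound}) so that the cone/interior-ball condition still holds.

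The main obstacle, and the step I would spend the most care on, is the interaction of the two different operators $P$ and $B$ acting at interior versus boundary collocation points: the standard power-function estimates are stated for a single functional type (usually plain point evaluation), whereas here the admissible linear combinations mix $P$-functionals at interior points with $B$-functionals at boundary points. I would handle this by a localization argument — for $\vx$ well inside $\Domain$ only the interior $P$-functionals at nearby points matter and the estimate reduces to the known Hermite–Birkhoff power-function bound with one derivative order $\Order(P)$; for $\vx$ within $\Order(h_X)$ of $\partial\Domain$ one splices in the boundary functionals, and since $\Order(B)\le\rho$ the same rate survives — and then take the worst of the two exponents, giving $\rho$. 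I would also need to confirm that the constants are independent of $\vx$ and of the particular (quasi-uniform) point set, which follows from the regularity of $\Domain$ and the scaling behaviour of the native-space norm of $\Kstar$ on $h_X$-balls, exactly as in the references cited. The smoothness bookkeeping ($m>d/2$, $\Order(P),\Order(B)<m-d/2$, so the exponent $m-\rho-d/2$ is positive) is what guarantees the bound is meaningful and is already secured by the hypotheses of Theorem~\ref{t:Gauss-RK-PB}.
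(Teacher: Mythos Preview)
Your proposal is correct and follows essentially the same route as the paper: both identify $\sigma(\vx)^2$ as a generalized power function for the kernel $\Kstar$ with respect to the mixed $P$- and $B$-functionals and then invoke the standard estimates from \cite[Chapter~14.5]{Fasshauer2007} and \cite[Chapters~11.3,~16.3]{Wendland2005}. The paper's own proof is terser---it simply writes down the multivariate Taylor expansion of $\Kstar(\vx,\vx_j)$ about a nearby collocation point and defers the remaining details to those references---whereas you spell out the variational characterization and the interior/boundary localization, but the underlying argument is the same.
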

\begin{proof}
Since there is at least one collocation point $\vx_j\in X_{\Domain}\cup X_{\partial\Domain}$ such that $\norm{\vx-\vx_j}_2\leq h_X$ we can use the multivariate Taylor expansion of $\Kstar(\vx,\vx_j)$ to introduce the order of $\sigma(\vx)$, i.e.,
\[
\Kstar(\vx,\vx_j)=\sum_{\abs{\alpha},\abs{\beta}<n}\frac{1}{\alpha!\beta!}D_1^{\alpha}D_2^{\beta}\Kstar(\vx_j,\vx_j)(\vx-\vx_j)^{\alpha+\beta}+R(\vx,\vx_j),
\quad \alpha,\beta\in\NN_0^d,
\]
where $R(\vx,\vx_j):=\sum_{\abs{\alpha},\abs{\beta}=n}\frac{1}{\alpha!\beta!}D_1^{\alpha}D_2^{\beta}\Kstar(\vz_{1},\vz_{2})(\vx-\vx_j)^{\alpha+\beta}$ for some $\vz_{1},\vz_{2}\in\Domain$ and $n:=\lceil m -d/2\rceil-1$. The rest of the proof proceeds as in \cite[Chapter~14.5]{Fasshauer2007} and \cite[Chapters~11.3, 16.3]{Wendland2005}.
\end{proof}

Using Lemma~\ref{l:powerfun-order} we can deduce the following proposition.
\begin{proposition}
When $P$ is the second-order elliptic differential operator and $B$ is the Dirichlet boundary condition, then, 
for any $\epsilon>0$,
\[
\sup_{\mu\in\Hilbert_K(\Domain)}\PP^{\mu}(\Eset_{\vx}^{\epsilon})=\Order\left(\frac{h_X^{m-\rho-d/2}}{\epsilon}\right),
\quad \vx\in\Domain,
\]
which indicates that
\[
\sup_{\mu\in\Hilbert_K(\Domain)}\PP^{\mu}\left(\norm{u-\hat{u}}_{\Leb_{\infty}(\Domain)}\geq\epsilon\right)\leq
\sup_{\mu\in\Hilbert_K(\Domain),\vx\in\Domain}\PP^{\mu}\left(\Eset_{\vx}^{\epsilon}\right)\rightarrow 0,
 \text{ when }h_X\rightarrow0.
\]
\end{proposition}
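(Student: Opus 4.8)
The plan is to combine the pointwise estimate of Lemma~\ref{l:powerfun-order} with the closed-form weak error bound $\erfc(\epsilon/(\sqrt2\,\sigma(\vx)))$ derived just above it. First I would observe that by Lemma~\ref{l:powerfun-order} there is a constant $C>0$, independent of $\vx$ and of the point set, such that $\sigma(\vx)\leq C h_X^{m-\rho-d/2}$ uniformly in $\vx\in\Domain$ (the Taylor-remainder argument there produces a bound depending only on derivative bounds of $\Kstar$ on $\overline\Domain$, hence uniform). Then I would plug this into the identity
\[
\sup_{\mu\in\Hilbert_K(\Domain)}\PP^{\mu}(\Eset_{\vx}^{\epsilon})=\erfc\!\left(\frac{\epsilon}{\sqrt2\,\sigma(\vx)}\right),
\]
using that $\erfc$ is monotone decreasing, to get $\sup_{\mu}\PP^{\mu}(\Eset_{\vx}^{\epsilon})\leq\erfc\!\left(\epsilon/(\sqrt2\,C h_X^{m-\rho-d/2})\right)$.

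Next I would convert this into the claimed $\Order(h_X^{m-\rho-d/2}/\epsilon)$ bound via the elementary tail inequality $\erfc(t)\leq e^{-t^2}/(t\sqrt\pi)\leq 1/(t\sqrt\pi)$ for $t>0$ — or, even more crudely, the Chebyshev/Markov estimate $\PP^{\mu}(|S_{\vx}-\hat u(\vx)|\geq\epsilon\mid \vS_{PB}=\vy_0)\leq \sigma(\vx)^2/\epsilon^2$ refined to the $L^1$ bound $\mathbb{E}(|S_{\vx}-\hat u(\vx)|\mid\cdot)/\epsilon = \sqrt{2/\pi}\,\sigma(\vx)/\epsilon$, since the conditional law is $\Normal(\hat u(\vx),\sigma(\vx)^2)$ by Corollary~\ref{c:cond-prob-density}. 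Either route yields $\sup_{\mu}\PP^{\mu}(\Eset_{\vx}^{\epsilon})=\Order(\sigma(\vx)/\epsilon)=\Order(h_X^{m-\rho-d/2}/\epsilon)$, with the implied constant uniform in $\vx$.

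For the second (uniform-in-$\vx$) assertion I would first note the set inclusion: if $\norm{u-\hat u}_{\Leb_\infty(\Domain)}\geq\epsilon$ then there is some $\vx\in\Domain$ with $|u(\vx)-\hat u(\vx)|\geq\epsilon$, and since $u$ is a realization consistent with the data one has $\{|u(\vx)-\hat u(\vx)|\geq\epsilon\}\subset\Eset_{\vx}^{\epsilon}$ after conditioning on $\vS_{PB}=\vy_0$; taking the supremum over $\vx$ gives the stated inequality $\sup_\mu\PP^{\mu}(\norm{u-\hat u}_{\Leb_\infty}\geq\epsilon)\leq\sup_{\mu,\vx}\PP^{\mu}(\Eset_{\vx}^{\epsilon})$. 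Then the right-hand side is $\Order(h_X^{m-\rho-d/2}/\epsilon)\to0$ as $h_X\to0$ for each fixed $\epsilon>0$, because $m-\rho-d/2>0$ under the standing hypotheses $\Order(P),\Order(B)<m-d/2$ (second-order $P$ with $\rho=\max\{\Order(P),\Order(B)\}<m-d/2$).

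The step I expect to require the most care is the \emph{uniformity of the constant in Lemma~\ref{l:powerfun-order} over $\vx\in\Domain$}: the Taylor-expansion argument must be carried out so that the remainder is controlled by $\sup_{\vz_1,\vz_2\in\overline\Domain}|D_1^\alpha D_2^\beta\Kstar(\vz_1,\vz_2)|$, which is finite because $\Kstar\in\Cont^{2n}(\overline\Domain\times\overline\Domain)$ (inherited from $K\in\Cont(\overline\Domain\times\overline\Domain)$ embedded in $\Hil^m$), together with a quasi-uniformity-type control ensuring the interpolation/power-function bound has its usual form near the boundary where $P$ is replaced by $B$; this is exactly the point deferred to \cite{Fasshauer2007,Wendland2005}. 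A secondary subtlety is making precise, for the $\Leb_\infty$ statement, that passing from "$\exists\vx$" to "$\sup_\vx$" inside the probability is legitimate — here it suffices to use the crude union-type bound $\PP^\mu(\sup_\vx|\cdot|\geq\epsilon)\leq\sup_\vx\PP^\mu(|\cdot|\geq\epsilon)$ is \emph{false} in general, so instead one argues via the event inclusion above together with the fact that the bound $\erfc(\epsilon/(\sqrt2\,\sigma(\vx)))$ is already uniform in $\vx$, giving the displayed inequality directly without needing a supremum of probabilities to dominate a probability of a supremum.
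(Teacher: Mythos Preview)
Your proposal is correct and follows exactly the route the paper takes: it simply says ``Using Lemma~\ref{l:powerfun-order} we can deduce the following proposition,'' i.e., plug $\sigma(\vx)=\Order(h_X^{m-\rho-d/2})$ into the identity $\sup_{\mu}\PP^{\mu}(\Eset_{\vx}^{\epsilon})=\erfc\bigl(\epsilon/(\sqrt{2}\,\sigma(\vx))\bigr)$ and use a crude tail bound for $\erfc$. You have in fact supplied considerably more detail than the paper does, and the two subtleties you flag (uniformity of the constant in Lemma~\ref{l:powerfun-order} and the passage from pointwise to $\Leb_\infty$ in the displayed inequality) are genuine but are left implicit in the paper as well.
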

Therefore we say that the estimator $\hat{u}$ converges to the exact solution $u$ of the PDE~(\ref{e:PDE-ell-no-noise}) in all probabilities $\PP^{\mu}$ when $h_X$ goes to $0$.

Sometimes we know only that the solution $u\in\Hil^{m}(\Domain)$. In this case, as long as the reproducing kernel Hilbert space is dense in the Sobolev space $\Hil^{m}(\Domain)$ with respect to its Sobolev norm, we can still say that $\hat{u}$ converges to $u$ in probability.

\subsection{Elliptic stochastic PDEs}\label{s:Int-SPDE}

Let $\xi:\Domain\times\Omega_W\rightarrow\RR$ be Gaussian with mean $0$ and covariance kernel $\Psi:\Domain\times\Domain\rightarrow\RR$ on the probability space $(\Omega_W,\Filter_W,\PP_W)$.
We consider an  elliptic PDE driven by a Gaussian additive noise $\xi$
\begin{equation}\label{e:PDE-ell-noise}
\begin{cases}
Pu=f+\xi,&\text{in }\Domain\subset\Rd,\\
Bu=g,&\text{on }\partial\Domain,
\end{cases}
\end{equation}
and suppose its solution $u\in\Leb_2(\Omega_W;\Hilbert_K(\Domain))$.

Since $\xi$ is a Gaussian process, on some underlying probability space $(\Omega_W,\Filter_W,\PP_W)$ with a known correlation structure, we can simulate the values of $\xi$ at $\vx_j, \ j=1,\ldots,N$. Consequently, we assume that the values $\{y_j\}_{j=1}^N$ and $\{y_{N+k}\}_{k=1}^M$ defined by
\[
y_j:=f(\vx_j)+\xi_{\vx_j},\quad y_{N+k}:=g(\vx_{N+k}), \quad j=1,\cdots,N,\ k=1,\cdots,M,
\]
are known. 
In this case, $(y_1,\cdots,y_N)\sim\Normal(\vf,\vPsi)$, where $\vf:=(f(\vx_1),\cdots,f(\vx_N))^T$ and $\vPsi:=(\Psi(\vx_j,\vx_k))_{j,k=1}^{N,N}$ with $\Psi$ being the covariance  kernel of $\xi$.
Let
\[
\vy_{\xi}:=\left(y_1,\cdots,y_{N+M}\right)^T,
\]
and $p_{\vy}$ be the probability density function of the random vector $\vy_{\xi}$.

In order to apply the general interpolation framework developed in Section~\ref{sec:StochasticProblems}, we consider the product space
\[
\Omega_{KW}:=\Omega_K\times\Omega_{W},\quad
\Filter_{KW}:=\Filter_K\otimes\Filter_{W},\quad
\PP_{W}^{\mu}:=\PP^{\mu}\otimes\PP_{W}.
\]
We assume that the random variables defined on the original probability spaces are extended to random variables on the new probability space in the natural way: if random variables $V_1:\Omega_K\rightarrow\RR$ and
$V_2:\Omega_W\rightarrow\RR$ are defined on $(\Omega_K,\Filter_K,\PP^{\mu})$ and $(\Omega_W,\Filter_W,\PP_W)$, respectively, then
\[
V_1(\omega,\tilde{\omega}):=V_1(\omega),\quad
V_2(\omega, \tilde{\omega}):=V_2(\tilde{\omega}),\quad
\text{for each }\omega\in\Omega_K\text{ and }\tilde{\omega}\in\Omega_W.
\]
Note that in this case the random variables have the same probability distributional properties, and they are independent on $(\Omega_{KW},\Filter_{KW},\PP_W^{\mu})$. This implies that the stochastic processes $S$, $PS$, $BS$ and $\xi$ can be extended to the product space $(\Omega_{KW},\Filter_{KW},\PP_W^{\mu})$ while preserving the original probability distributional properties.

Fix any $\vx\in\Domain$. Let $\Aset_{\vx}(v):=\left\{\omega_1\times\omega_2\in\Omega_{KW}:\omega_1(\vx)=v\right\}$ for each $v\in\RR$, and
$\Aset_{PB}^{\vy_{\xi}}:=\left\{\omega_1\times\omega_2\in\Omega_{KW}:P\omega_1(\vx_1)=y_1(\omega_2),\ldots,B\omega_1(\vx_{N+M})=y_{N+M}(\omega_2)\right\}$.
Using the methods in Section~\ref{s:Int-PDE} and Theorem~\ref{t:Gauss-RK-PB}, we obtain
\[
\PP_{W}^{\mu}(\Aset_{\vx}(v)|\Aset_{PB}^{\vy_{\xi}})=\PP_{W}^{\mu}(S_{\vx}=v|\vS_{PB}=\vy_{\xi})=p_{\vx}^{\mu}(v|\vy_{\xi}),
\]
where $p^{\mu}_{\vx}(\cdot|\cdot)$ is the conditional probability density function of the random variable $S_{\vx}$ given the random vector $\vS_{PB}:=\left(P S_{\vx_1},\cdots,P S_{\vx_N},B S_{\vx_{N+1}},\cdots,B S_{\vx_{N+M}}\right)^T$. (Here $\vy_{\xi}$ is viewed as given values.) According to the natural extension rule, $p_{\vx}^{\mu}$ is consistent with the formula in Corollary~\ref{c:cond-prob-density}.
If $\vKstar_{PB}$ is nonsingular, then the approximation $\hat{u}(\vx)$ is solved by the maximization problem
\[
\hat{u}(\vx)=\argmax_{v\in\RR}\sup_{\mu\in\Hilbert_K(\Domain)}p_{\vx}^{\mu}(v|\vy_{\xi})
=\vk_{PB}(\vx)^T\vKstar_{PB}{}^{-1}\vy_{\xi},
\]
where $\vKstar_{PB}$ and $\vk_{PB}(\vx)$ are defined in Corollary~\ref{c:cov-matrix-Gauss-RK-PB} and~\ref{c:cond-prob-density}.
This means that its random coefficients are obtained from the linear equation system 
\[
\vKstar_{PB}\vc=\vy_{\xi}. 
\]

The estimator $\hat{u}$ also satisfies the interpolation condition, i.e., $P\hat{u}(\vx_1)=y_1,\ldots,P\hat{u}(\vx_N)=y_N$ and $B\hat{u}(\vx_{N+1})=y_{N+1},\ldots,B\hat{u}(\vx_{N+M})=y_{N+M}$.
It is obvious that $\hat{u}(\cdot,\omega_2)\in\Hilbert_K(\Domain)$ for each $\omega_2\in\Omega_{W}$.
Since the random part of $\hat{u}(\vx)$ is only related to $\vy_{\xi}$,
we can formally rewrite $\hat{u}(\vx,\omega_2)$ as $\hat{u}(\vx,\vy_{\xi})$ and $\hat{u}(\vx)$ can be transferred to a random variable defined on the finite-dimensional probability space $(\RR^{N+M},\Borel(\RR^{N+M}),\mu_{\vy})$, where the probability measure $\mu_{\vy}$ is defined by $\mu_{\vy}(\ud\vv):=p_{\vy}(\vv)\ud\vv$. Moreover, the probability distributional properties of $\hat{u}(\vx)$ do not change when $(\Omega_{W},\Filter_{W},\PP_{W})$ is replaced by $(\RR^{N+M},\Borel(\RR^{N+M}),\mu_{\vy})$.

Finally, we discuss the convergence analysis of this estimator. 
We assume that $u(\cdot,\omega_2)$ belongs to $\Hilbert_K(\Domain)$ almost surely for $\omega_2\in\Omega_W$. Therefore $u$ can be seen as a map from $\Omega_{W}$ into $\Hilbert_K(\Domain)$. So we have $u\in\Omega_{KW}=\Omega_K\times\Omega_{W}$.

We fix any $\vx\in\Domain$ and any $\epsilon>0$. Let the subset
\[
\begin{split}
\Eset_{\vx}^{\epsilon}:=&\Big\{\omega_1\times\omega_2\in\Omega_{KW}:\abs{\omega_1(\vx)-\hat{u}(\vx,\omega_2)}\geq\epsilon, \\
&\text{ such that }
P\omega_1(\vx_1)=y_1(\omega_2),\ldots,B\omega_1(\vx_{N+M})=y_{N+M}(\omega_2)\Big\}.
\end{split}
\]
Because $P S_{\vx}(\omega_1,\omega_2)=P S_{\vx}(\omega_1)=P\omega_1(\vx)$, $B S_{\vx}(\omega_1,\omega_2)=B S_{\vx}(\omega_1)=B\omega_1(\vx)$ and $\vy_{\vxi}(\omega_1,\omega_2)=\vy_{\vxi}(\omega_2)$ for each $\omega_1\in\Omega_K$ and $\omega_2\in\Omega_{W}$ (see~Theorem~\ref{t:Gauss-RK-PB})
we can deduce that
\[
\begin{split}
\PP^{\mu}_{W}\left(\Eset_{\vx}^{\epsilon}\right)&=
\PP^{\mu}_{W}\left(\abs{S_{\vx}-\hat{u}(\vx)}\geq\epsilon \text{ such that } \vS_{PB}=\vy_{\xi}\right)\\
&=\int_{\RR^{N+M}}\int_{\abs{v-\hat{u}(\vx,\vv)}\geq\epsilon}p_{\vx}^{\mu}(v|\vv)p_{\vy}(\vv)\ud v\ud\vv\\
&=\int_{\RR^{N+M}}\erfc\left(\frac{\epsilon}{\sqrt{2}\sigma(\vx)}\right)p_{\vy}(\vv)\ud\vv
=\erfc\left(\frac{\epsilon}{\sqrt{2}\sigma(\vx)}\right),
\end{split}
\]
where the variance of $p^{\mu}_{\vx}$ is $\sigma(\vx)^2 =\Kstar(\vx,\vx)-\vk_{PB}(\vx)^T \vKstar_{PB}{}^{-1}\vk_{PB}(\vx)$ (see Corollary~\ref{c:cond-prob-density}).

Similar to the analysis of the error bounds from Section~\ref{s:Int-PDE}, we also deduce the following proposition (for more details see \cite{YePhD2012}).
\begin{proposition}
When $P$ is the second-order elliptic differential operator and $B$ is the Dirichlet boundary condition, then,
\[
\lim_{h_X\rightarrow0}\sup_{\mu\in\Hilbert_K(\Domain)}\PP^{\mu}_W\left(\norm{u-\hat{u}}_{\Leb_{\infty}(\Domain)}\geq\epsilon \right)=0,\quad
\text{for any }\epsilon>0.
\]
\end{proposition}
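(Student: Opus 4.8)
The plan is to mirror, almost verbatim, the convergence argument already carried out for the deterministic elliptic PDE in Section~\ref{s:Int-PDE}, using the key observation (established just above the statement) that the weak error bound for the stochastic problem collapses to exactly the same quantity as in the deterministic case, namely $\erfc\!\left(\frac{\epsilon}{\sqrt{2}\,\sigma(\vx)}\right)$, because integrating the $\omega_2$-dependence out of $\PP^{\mu}_W(\Eset_{\vx}^{\epsilon})$ leaves $\sigma(\vx)$ untouched. Thus the stochasticity of the right-hand side plays no role in the final rate; all that matters is the behaviour of $\sigma(\vx)^2=\Kstar(\vx,\vx)-\vk_{PB}(\vx)^T\vKstar_{PB}{}^{-1}\vk_{PB}(\vx)$ as $h_X\to0$.

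First I would invoke Lemma~\ref{l:powerfun-order}: under the stated hypotheses ($P$ second-order elliptic, $B$ Dirichlet), $\sigma(\vx)=\Order(h_X^{m-\rho-d/2})$ uniformly in $\vx\in\Domain$, where $\rho=\max\{\Order(P),\Order(B)\}$, and $m>d/2$ together with $\Order(P),\Order(B)<m-d/2$ guarantees the exponent $m-\rho-d/2$ is strictly positive. Next, using the asymptotic $\erfc(z)=\Order(1/z)$ as $z\to\infty$ (equivalently the cruder bound $\erfc(z)\le 1$ combined with $\erfc(z)\to0$), I would conclude that for each fixed $\epsilon>0$,
\[
\sup_{\mu\in\Hilbert_K(\Domain)}\PP^{\mu}_W(\Eset_{\vx}^{\epsilon})
=\erfc\!\left(\frac{\epsilon}{\sqrt{2}\,\sigma(\vx)}\right)
=\Order\!\left(\frac{h_X^{m-\rho-d/2}}{\epsilon}\right)\longrightarrow0
\quad\text{as }h_X\to0,
\]
with the $\Order$-constant independent of $\vx$ and of $\mu$. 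Then I would pass from the pointwise statement to the $\Leb_\infty$ statement exactly as in the deterministic proposition: the event $\{\norm{u-\hat{u}}_{\Leb_{\infty}(\Domain)}\ge\epsilon\}$ is contained in $\bigcup_{\vx\in\Domain}\Eset_{\vx}^{\epsilon}$ (or, for a clean measurability argument, one uses continuity of $u-\hat u$ on the compact $\overline{\Domain}$ so that the sup is attained and a single $\vx$ suffices), hence
\[
\sup_{\mu}\PP^{\mu}_W\!\left(\norm{u-\hat{u}}_{\Leb_{\infty}(\Domain)}\ge\epsilon\right)
\le\sup_{\mu,\,\vx\in\Domain}\PP^{\mu}_W(\Eset_{\vx}^{\epsilon}),
\]
and the right-hand side tends to $0$ by the uniform bound just obtained; taking $h_X\to0$ finishes the proof.

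The main obstacle is the passage from the pointwise weak error bound to the uniform-in-$\vx$ (i.e.\ $\Leb_\infty$) bound, which is precisely where one needs the $\Order$-estimate in Lemma~\ref{l:powerfun-order} to be \emph{uniform} over $\Domain$ and where a genuine measurability/supremum subtlety lurks: $\norm{u-\hat u}_{\Leb_\infty(\Domain)}$ is a supremum of uncountably many random variables, so one must justify that the event is measurable and that the union bound (or the ``sup attained at one point'' reduction via continuity of $u-\hat u\in\Cont(\overline{\Domain})$, valid since $\Hilbert_K(\Domain)\hookrightarrow\Cont^n(\overline{\Domain})$) is legitimate. Everything else — the $\erfc$ asymptotics, the independence of the bound from $\mu$, the reduction of the stochastic problem to the deterministic estimate — is routine and already essentially spelled out in the preceding pages; the reader is also referred to \cite{YePhD2012} for the remaining details.
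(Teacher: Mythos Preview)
Your proposal is correct and follows essentially the same approach as the paper: the paper's own ``proof'' is nothing more than the sentence ``Similar to the analysis of the error bounds from Section~\ref{s:Int-PDE}, we also deduce the following proposition (for more details see \cite{YePhD2012}),'' i.e.\ exactly the reduction you describe---use the computation $\PP^{\mu}_W(\Eset_{\vx}^{\epsilon})=\erfc(\epsilon/\sqrt{2}\sigma(\vx))$ established just above, then repeat the deterministic argument based on Lemma~\ref{l:powerfun-order}. If anything, you have been more careful than the paper by flagging the uniformity-in-$\vx$ of the power-function estimate and the measurability of the $\Leb_\infty$-event, subtleties the paper leaves entirely to \cite{YePhD2012}.
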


\section{Numerical Experiments}\label{sec:numerical-experimants}

We consider the following stochastic heat equation with zero boundary condition
\begin{equation}\label{eq:num-exa}
\begin{cases}
\ud U_t=\frac{\ud^2}{\ud x^2} U_t \ud t+ \sigma\ud W_{t,i},\quad\text{in }\Domain:=(0,1)\subset\RR,\quad 0<t<T:=1,\\
~~U_t=0,~~~~~~~~~~~~~~~~~~~~~~~~\text{on }\partial\Domain,\\
U_0(x)=u_0(x):=\sqrt{2}\left(\sin(\pi x)+\sin(2\pi x)+\sin(3\pi x)\right),
\end{cases}
\end{equation}
driven by two types of space-time white noise (colored in space) $W$ of the form
\[
W_{t,i}:=\sum_{k=1}^{\infty}W^k_tq_k^i\phi_k,\quad
q_k:=\frac{1}{k\pi},\quad \phi_k(x):=\sqrt{2}\sin(k\pi x),
\]
where $W_t^k, \ k\in\mathbb{N}$, is a sequence of independent one-dimensional Brownian motions, and $i=1,2$. Note that choosing the larger value of $i$ corresponds to a noise that is smoother in space.

The spatial covariance function $R^i(x,y)=\sum_{k=1}^{\infty}q_k^{2i}\phi_k(x)\phi_k(y)$, $i=1,2$, takes the specific forms
\[
R^1(x,y)=\min\{x,y\}-xy,\quad 0<x,y<1,
\]
and
\[
R^2(x,y)=
\begin{cases}
-\frac{1}{6}x^3+\frac{1}{6}x^3y+\frac{1}{6}xy^3-\frac{1}{2}xy^2+\frac{1}{3}xy,&0<x<y<1,\\
-\frac{1}{6}y^3+\frac{1}{6}xy^3+\frac{1}{6}x^3y-\frac{1}{2}x^2y+\frac{1}{3}xy,&0<y<x<1.
\end{cases}
\]

The solution of SPDE \eqref{eq:num-exa} is given by (for more details see, for instance, \cite{Chow2007})
\[
U_t(x)=\sum_{k=1}^{\infty}\xi^k_t\phi_k(x),\quad x\in\Domain:=(0,1),\quad 0<t<T:=1,
\]
where
\[
\xi^k_0:=\int_{\Domain}u_0(x)\phi_k(x)\ud x,\quad
\xi^k_t:=\xi^k_0\me^{-k^2\pi^2 t}+\frac{\sigma}{q_k^i}\int_0^t\me^{k^2\pi^2(s-t)}\ud W_s^k.
\]
From this explicit solution we can get that
\[
\mathbb{E}(U_t(x))=\sum_{k=1}^{\infty}\xi^k_0\me^{-k^2\pi^2 t}\phi_k(x),\quad
\Var(U_t(x))=\sum_{k=1}^{\infty}\frac{\sigma^2}{2k^2\pi^2 q_k^{2i} }(1-\me^{-2k^2\pi^2 t})\abs{\phi_k(x)}^2.
\]


We discretize the time interval $[0,T]$ with $n$ equal time steps so that $\delta t:=T/n$. We also choose the reproducing kernel $K(x,y):=g_{3,2\theta}(x-y)$, where $g_{3,2\theta}$ is the Mat\'ern function with degree $m:=3$ and shape parameter $\theta>0$ (see Example~\ref{ex:Sobolev-spline}). As collocation points we select uniform grid points $X_{\Domain}\subset(0,1)$ and $X_{\partial\Domain}:=\{0,1\}$. Let $P:=I-\delta t\ud^2/\ud x^2$ and $B:=I|_{\{0,1\}}$.
Using our kernel-based collocation method we can perform the following computations to numerically estimate the sample paths $\hat{u}^n_j\approx U_{t_n}(\vx_j)$.
\emph{Algorithm to solve SPDE~(\ref{eq:num-exa}):}
\begin{enumerate}
\item Initialize
\begin{itemize}
\item $\displaystyle \hat{\vu}^0:=\left(u_{0}(x_1),\cdots,u_{0}(x_N)\right)^T$
\item $\displaystyle
\vKstar_{PB}:=
\begin{pmatrix}
(P_1P_2\Kstar(x_j,x_k))_{j,k=1}^{N,N}, & (P_1B_2\Kstar(x_j,x_{N+k}))_{j,k=1}^{N,M}\\
(B_1P_2\Kstar(x_{N+j},x_k))_{j,k=1}^{M,N}, & (B_1B_2\Kstar(x_{N+j},x_{N+k}))_{j,k=1}^{M,M}\\
\end{pmatrix}
$
\item $\displaystyle
\vB:=\begin{pmatrix}
(P_2\Kstar(x_j,x_k))_{j,k=1}^{N,N}, & (B_2\Kstar(x_j,x_{N+k}))_{j,k=1}^{N,M}\\
\end{pmatrix}
$
\item $\displaystyle
\vPsi:=\sigma^2\delta t(R(x_j,x_k))_{j,k=1}^{N,N}
$
\item $\displaystyle
\vA:=\vB\vKstar_{PB}{}^{-1}
$
\end{itemize}
\item Repeat for $j=1,2,\ldots,n$, i.e., for $t_1,t_2,\ldots,t_n=T$
\begin{itemize}
\item Simulate
$\displaystyle
\vnoise\sim\Normal\left(0,\vPsi\right)$
\item $\displaystyle
 \hat{\vu}^j:=\vB \vKstar_{PB}{}^{-1}\begin{pmatrix}\hat{\vu}^{j-1}+\vnoise\\0\end{pmatrix}=\vA\begin{pmatrix}\hat{\vu}^{j-1}+\vnoise\\0\end{pmatrix}
$\end{itemize}
\end{enumerate}

Note that in the very last step the matrix $\vA$ is pre-computed and can be used for all time steps, and for different sample paths; that makes the proposed algorithm to be quite efficient.

We approximate the mean and variance of $U_t(x)$ by sample mean and sample variance from $s:=10000$ simulated sample paths using the above algorithm, i.e.,
\[
\mathbb{E}(U_{t_n}(x_j))\approx \frac{1}{s}\sum_{k=1}^{s}\hat{u}_j^n(\omega_k),\quad
\Var(U_{t_n}(x_j))\approx \frac{1}{s}\sum_{i=1}^{s}\left(\hat{u}_j^n(\omega_i)-\frac{1}{s}\sum_{k=1}^{s}\hat{u}_j^n(\omega_k)\right)^2.
\]

\begin{figure}[h]
\begin{center}
  \centering
  \subfigure[With spatial covariance $R^1$]{
    \includegraphics[width=0.48\textwidth, height=0.45\textwidth]{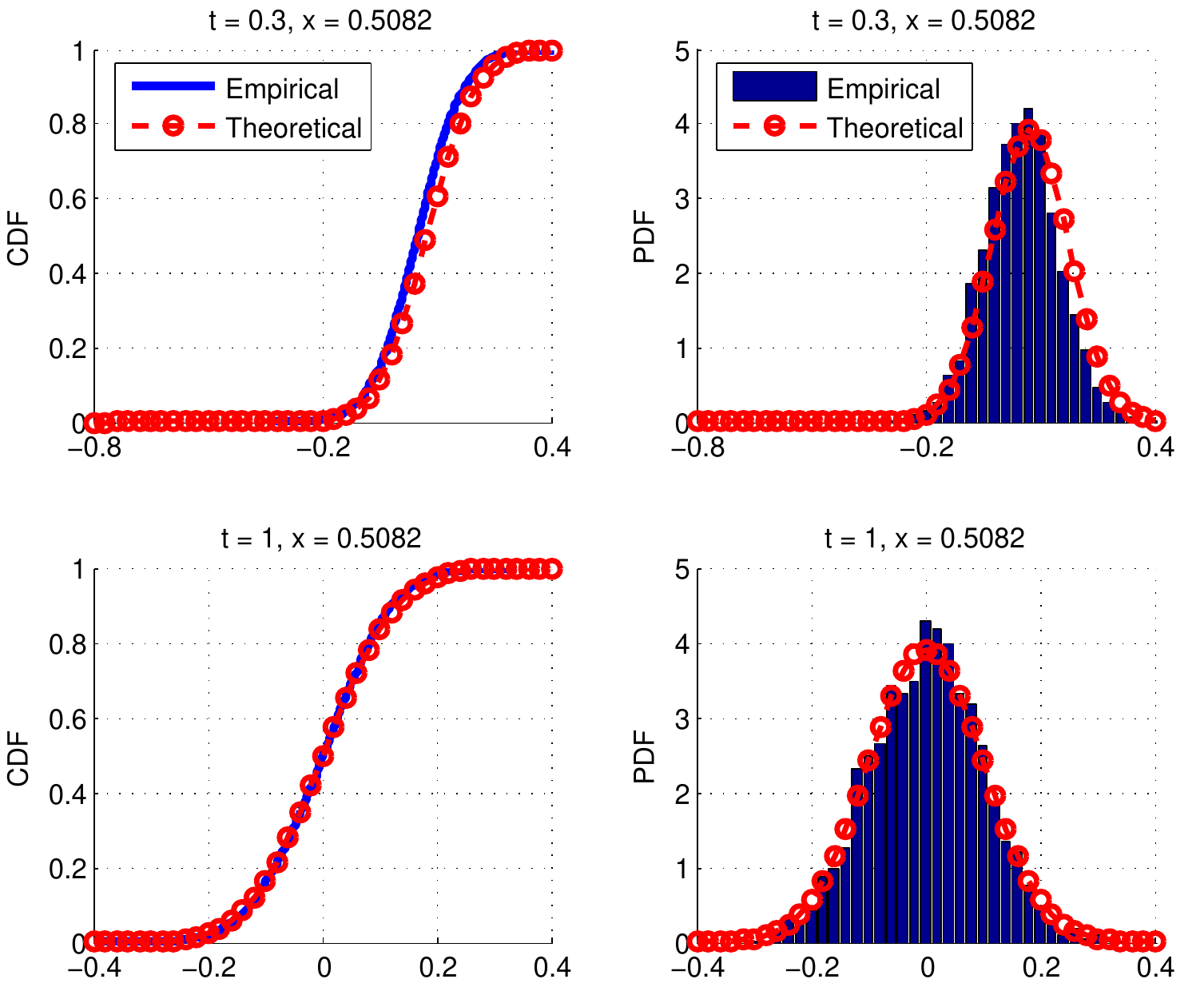}}
  \subfigure[With spatial covariance $R^2$]{
    \includegraphics[width=0.48\textwidth, height=0.45\textwidth]{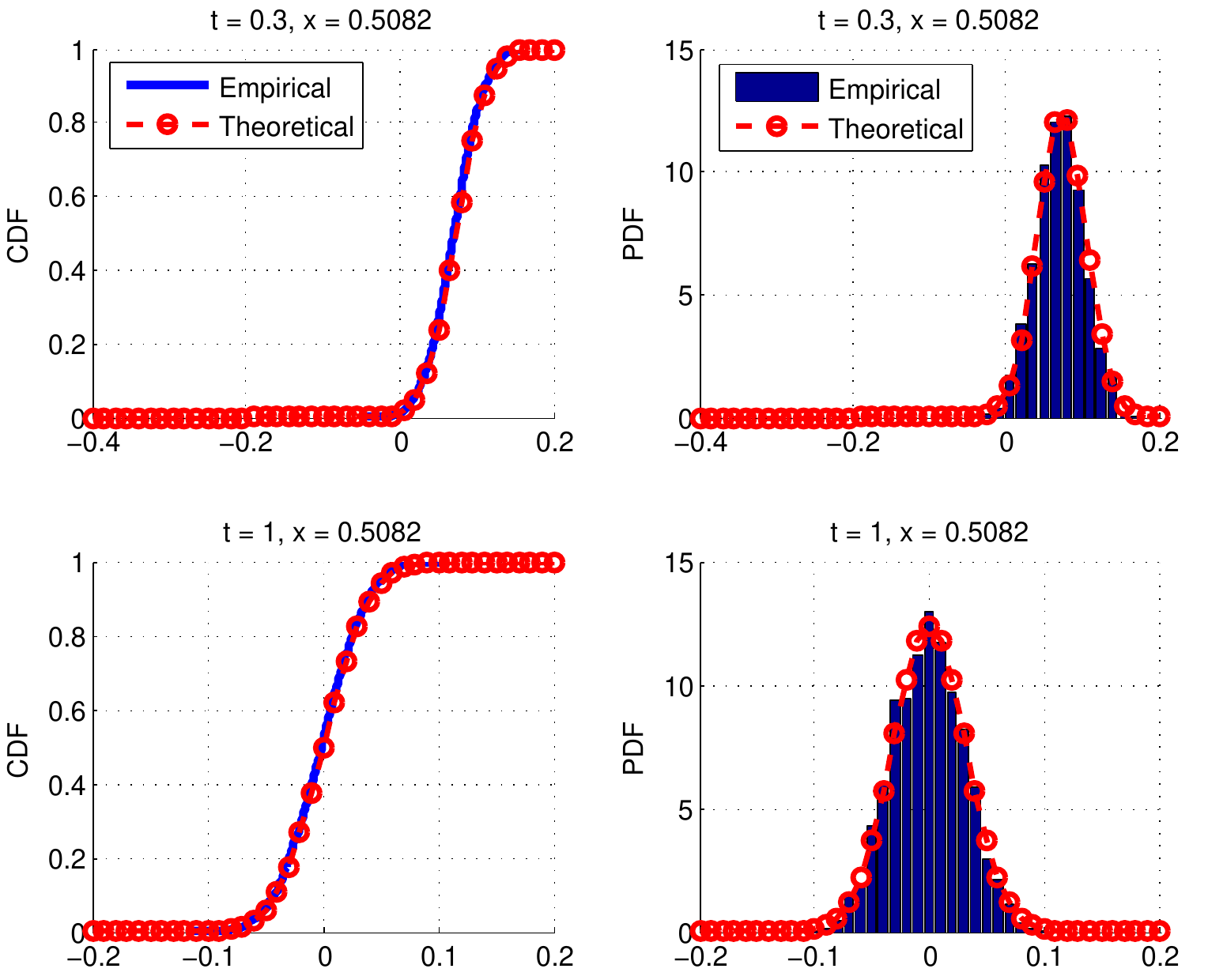}}
\caption{Empirical and theoretical probability distribution of $U_t(x)$ for uniform points $N:=58$ and $M:=2$, equal time steps $n:=800$, $\theta:=26.5$, $\sigma:=1$.}\label{fig:distribution}
\end{center}
\end{figure}

\begin{figure}[h]
\begin{center}
  \centering
  \subfigure[With spatial covariance $R^1$]{
    \includegraphics[width=0.48\textwidth, height=0.45\textwidth]{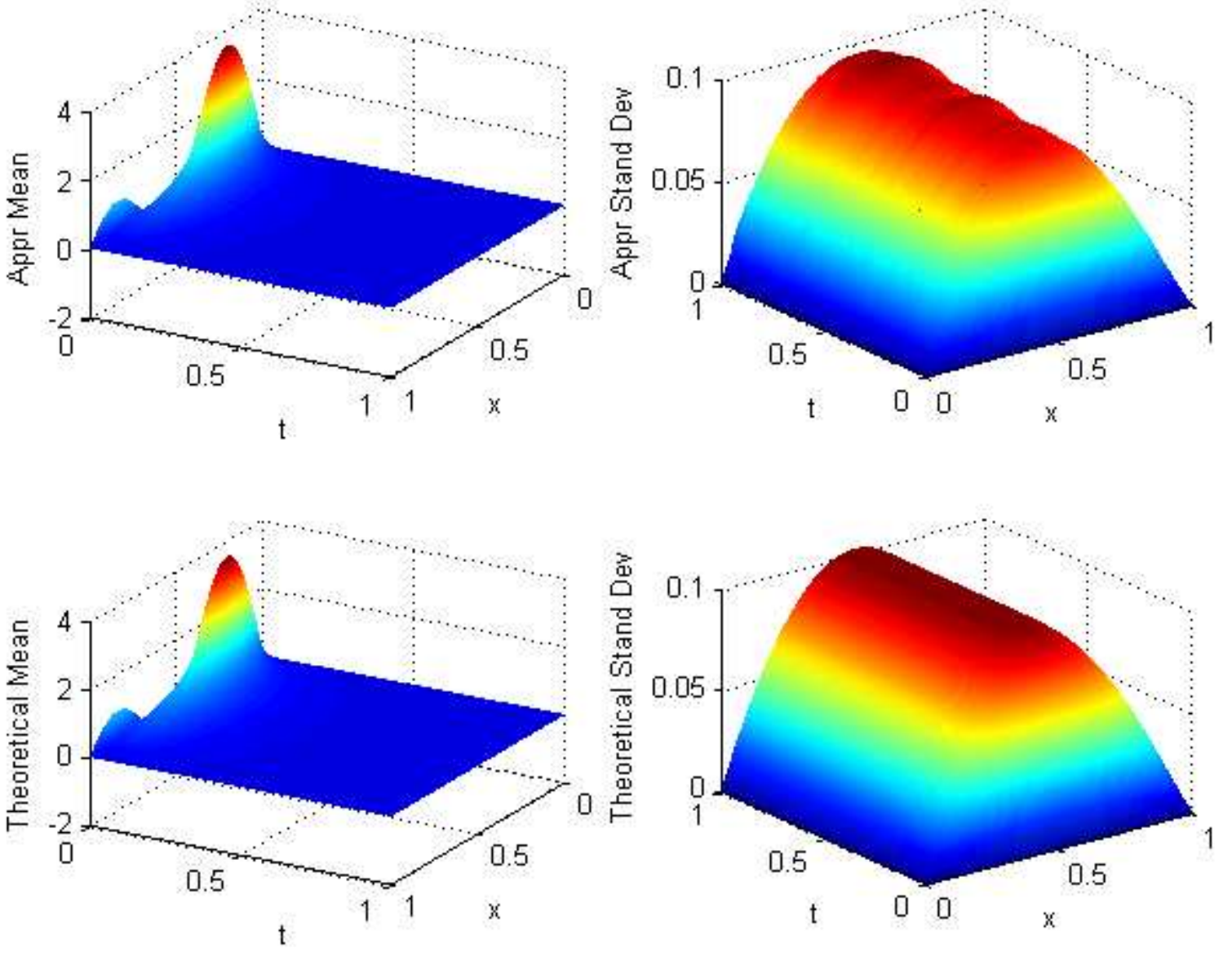}}
  \subfigure[With spatial covariance $R^2$]{
    \includegraphics[width=0.48\textwidth, height=0.45\textwidth]{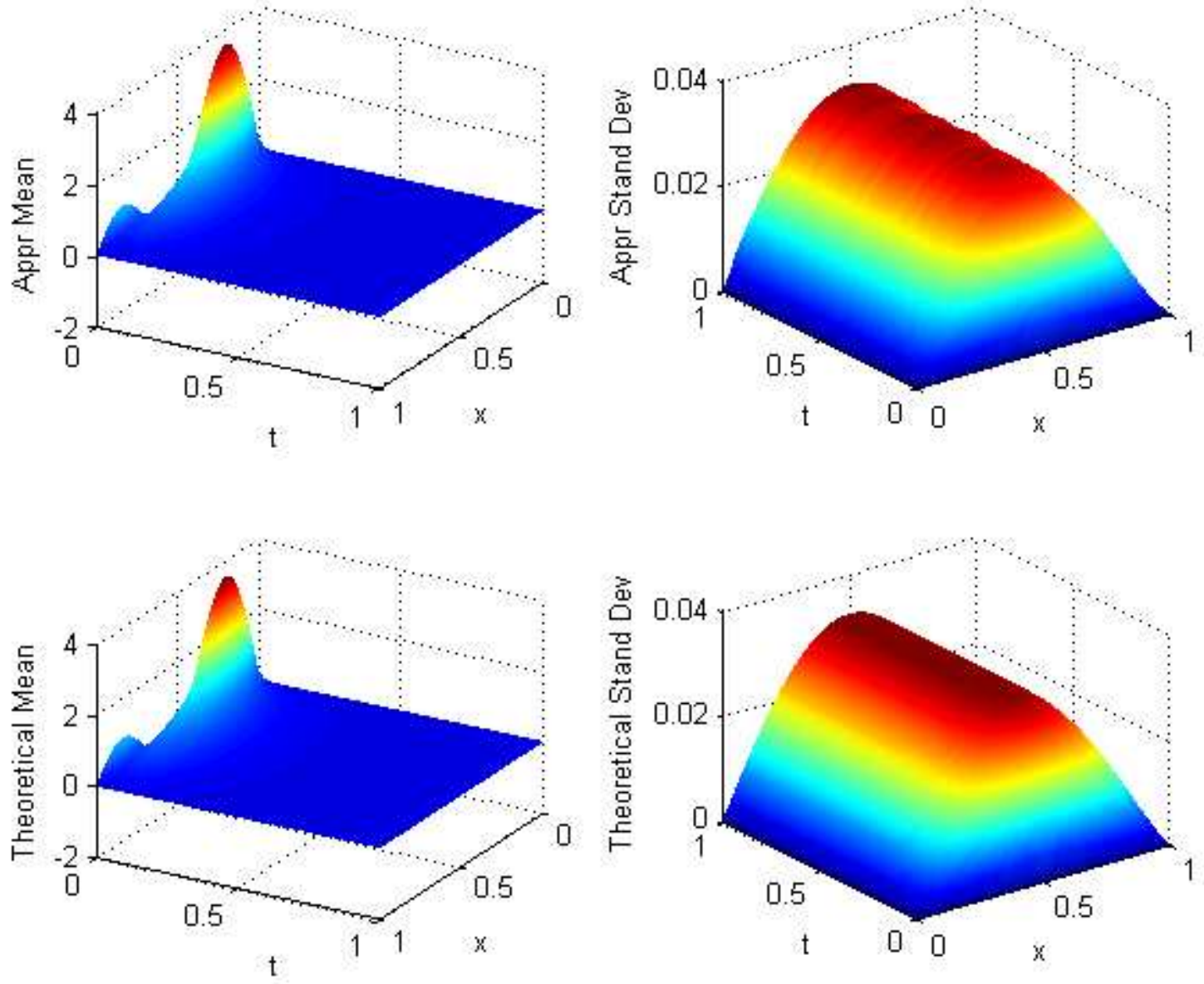}}
\caption{Approximate and theoretical mean and standard deviation for uniform points $N:=58$ and $M:=2$, equal time steps $n:=800$, $\theta:=26.5$, $\sigma:=1$.}\label{fig:mean-variance}
\end{center}
\end{figure}

Figure~\ref{fig:distribution} shows that the  histograms at different values of $t$ and $x$ resemble the theoretical normal distributions.
We notice a small shift in the probability distribution function of the solution $U$, at times closer to zero, and when the noise is equal to $W_1$ (Figure ~\ref{fig:distribution}, left panel). This shift is due to the fact that $W_1$ is rougher in space than $W_2$.

Our use of an implicit time stepping scheme reduces the frequency of the white noise, i.e., $\lim_{\delta t\rightarrow0}\delta W/\delta t\sim\delta_0$. Consequently, Figure~\ref{fig:mean-variance} shows that the approximate mean is well-behaved but the approximate variance is a little smaller than the exact variance.
According to Figure~\ref{fig:time-h-order} we find that this numerical method is convergent as both $\delta t$ and $h_X$ are refined. Finally, we want to mention that the distribution of collocation points, the shape parameter, and the kernel itself were chosen empirically and based on the authors' experience. As mentioned before, more precise methods are currently not available.  A rigorous investigation of these questions, as well as determination of precise rates of convergence is reserved for future work.

\begin{figure}
  \centering
  \subfigure[With spatial covariance $R^1$]{
    \includegraphics[width=0.48\textwidth, height=0.45\textwidth]{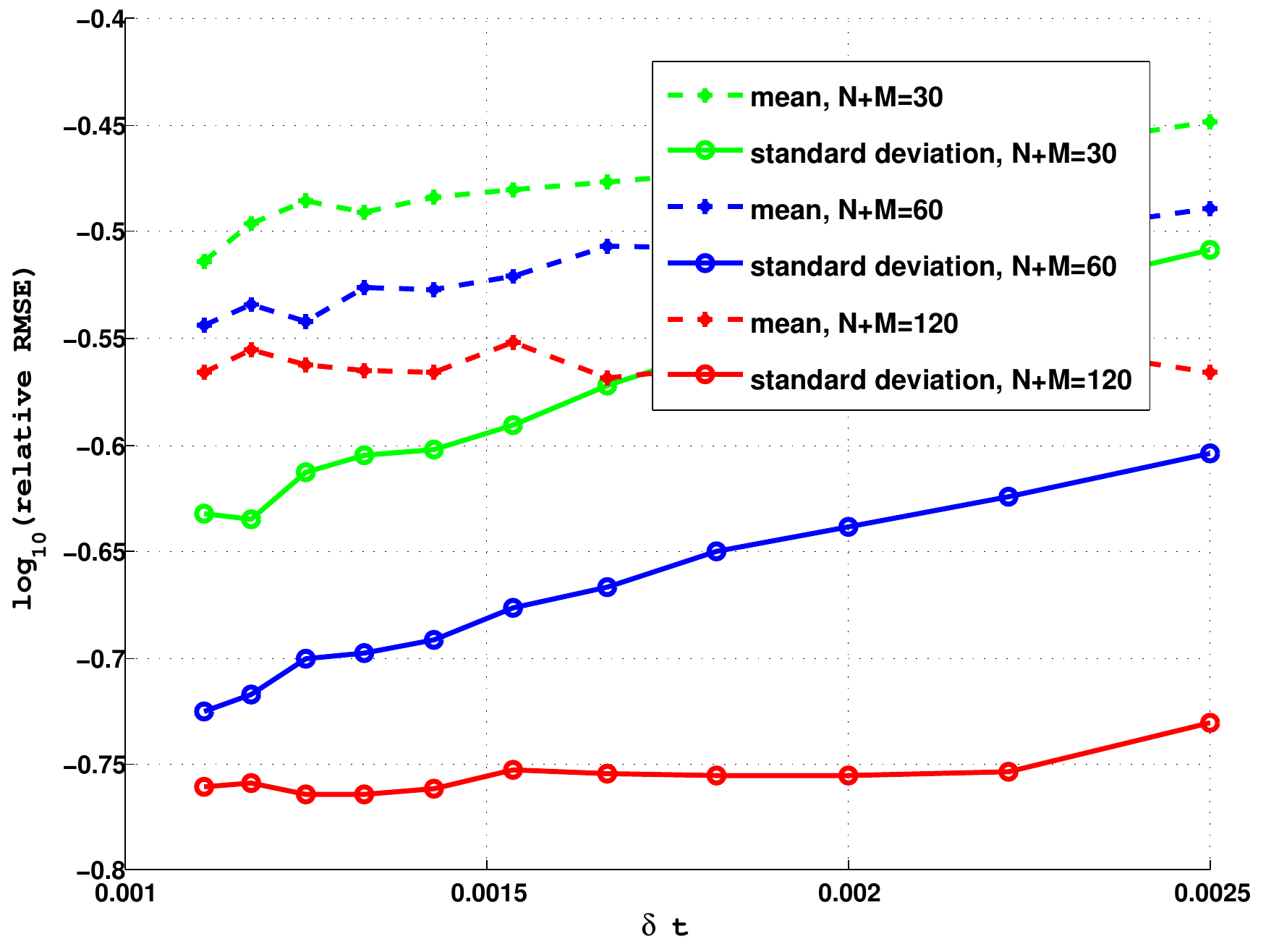}}
  \subfigure[With spatial covariance $R^2$]{
    \includegraphics[width=0.48\textwidth, height=0.45\textwidth]{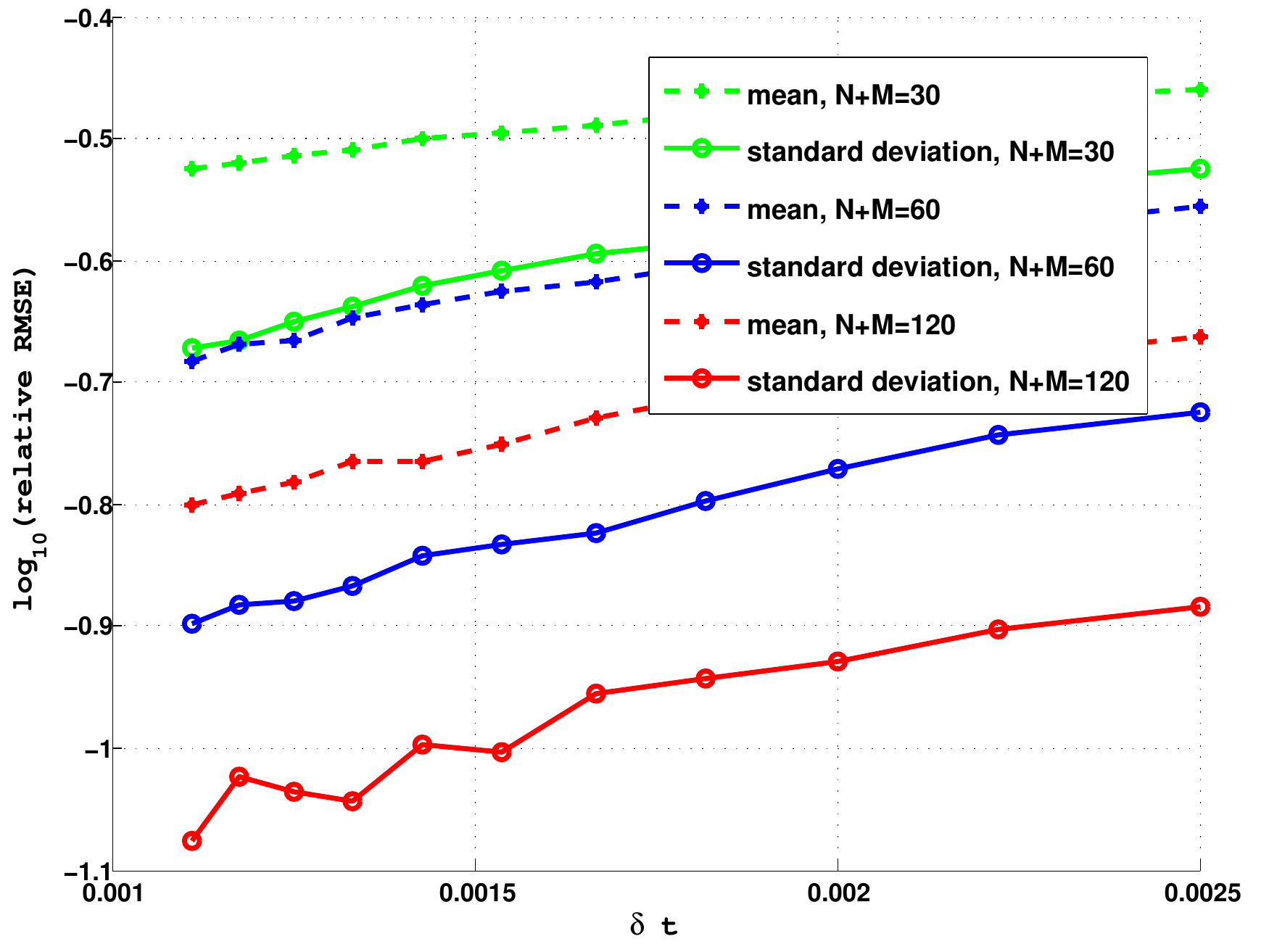}}
\caption{Convergence of mean and variance with respect to refinement of points and time steps for $\sigma:=1$.
(The relative RMSE of exact $U$ and approximate $\hat{U}$ is defined by $\mathrm{RMSE}(U,\hat{U}):=\sqrt{\frac{1}{nN}\sum_{j=1}^n\sum_{k=1}^N(U(t_j,x_k)-\hat{U}(t_j,x_k))^2/\norm{U(t_k,\cdot)}_{\infty}^2}$.)
}\label{fig:time-h-order}
\end{figure}


\section{Final Remarks}

This new numerical approach can also be used to approximate systems of elliptic PDEs with vector Gaussian noises $\vnoise_1$ and $\vnoise_2$ or nonlinear PDEs with Gaussian noise $\xi$, i.e.,
\[
\begin{cases}
\mathbf{P} u=\vf+\vnoise_1,&\text{in }\Domain\subset\Rd,\\
\mathbf{B} u=\vg+\vnoise_2,&\text{on }\partial\Domain,
\end{cases}
\quad\text{or}\quad
\begin{cases}
F(\mathbf{P} u)=\psi(f,\xi),&\text{in }\Domain\subset\Rd,\\
G(\mathbf{B} u)=g,&\text{on }\partial\Domain,
\end{cases}
\]
where $\mathbf{P}:=(P^1,\cdots,P^{\np})^T$ is a vector differential operator and $\mathbf{B}:=(B^1,\cdots,B^{\nb})^T$ is a vector boundary operator,
and $F:\RR^{\np}\rightarrow\RR$ and $G:\RR^{\nb}\rightarrow\RR$ (see~\cite{YePhD2012}).

In addition to the additive noise case discussed here, we can also use the kernel-based collocation method to approximate other well-posed stochastic parabolic equations with multiplicative noise, e.g.,
\begin{equation}\label{e:SPDE-par-extend}
\begin{cases}
\ud U_t=\mathcal{A}U_t \ud t+ \psi(U_t) \ud W_t,&\text{in }\Domain\subset\Rd,\quad 0<t<T,\\
BU_t=0,&\text{on }\partial\Domain,\\
U_0=u_0,
\end{cases}
\end{equation}
where $\psi:\RR\rightarrow\RR$. Since $\int_{t_{j-1}}^{t_j}\psi(U_{s})\ud W_s\approx\psi(U_{t_{j-1}})\delta W_j$, the algorithm for SPDE~(\ref{e:SPDE-par-extend}) is similar to before:
\begin{enumerate}
\item Initialize
\begin{itemize}
\item $\displaystyle \hat{\vu}^0:=\left(u_{0}(\vx_1),\cdots,u_{0}(\vx_N)\right)^T$
\item $\displaystyle
\vKstar_{PB}:=
\begin{pmatrix}
(P_1P_2\Kstar(\vx_j,\vx_k))_{j,k=1}^{N,N}, & (P_1B_2\Kstar(\vx_j,\vx_{N+k}))_{j,k=1}^{N,M}\\
(B_1P_2\Kstar(\vx_{N+j},\vx_k))_{j,k=1}^{M,N}, & (B_1B_2\Kstar(\vx_{N+j},\vx_{N+k}))_{j,k=1}^{M,M}\\
\end{pmatrix}
$
\item $\displaystyle
\vB:=\begin{pmatrix}
(P_2\Kstar(\vx_j,\vx_k))_{j,k=1}^{N,N}, & (B_2\Kstar(\vx_j,\vx_{N+k}))_{j,k=1}^{N,M}\\
\end{pmatrix}
$
\item $\displaystyle
\vPsi_0:=\delta t(R(\vx_j,\vx_k))_{j,k=1}^{N,N}
$
\end{itemize}
\item Repeat for $j=1,2,\cdots,n$, i.e., for $t_1,~t_2,~\cdots,~t_n=T$
\begin{itemize}
\item $\displaystyle
\vV_1:=\diag\left(\psi(\hat{u}^{j-1}_1),\cdots,\psi(\hat{u}^{j-1}_N)\right)
$
\item $\displaystyle
\vPsi:=\vV_1 \vPsi_0 \vV_1
$
\item Simulate
$\displaystyle
\vnoise\sim\Normal\left(0,\vPsi\right),
\quad \vA:=\vB\vKstar_{PB}{}^{-1}$
\item $\displaystyle
 \hat{\vu}^j:=\vB\vKstar_{PB}{}^{-1}
 \begin{pmatrix}\hat{\vu}^{j-1}+\vnoise\\0\end{pmatrix}=\vA\begin{pmatrix}\hat{\vu}^{j-1}+\vnoise\\0\end{pmatrix}
$
\end{itemize}
\end{enumerate}

Of course, now the matrix $\vA$ needs to be updated for each time step and for each sample path so that the algorithm is much costlier.

\appendix


\section{Reproducing-Kernel Hilbert Spaces}\label{s:RKHS}


\begin{definition}[{\cite[Definition~10.1]{Wendland2005}}]\label{d:RKHS}
A Hilbert
space $\Hilbert_K(\Domain)$ of functions $f:\Domain\rightarrow\RR$ is
called a \emph{reproducing-kernel Hilbert space} with a
\emph{reproducing kernel} $K:\Domain\times\Domain\rightarrow\RR$ if
\[
(i)~K(\cdot,\vy)\in\Hilbert_K(\Domain)\quad \text{ and }\quad
(ii)~f(\vy)=\langle K(\cdot,\vy),f \rangle_{K,\Domain},
\]
for all $f\in\Hilbert_K(\Domain)$ and all $\vy\in\Domain$. Here $\langle\cdot,\cdot\rangle_{K,\Domain}$
is the inner product of $\Hilbert_K(\Domain)$.
\end{definition}

According to \cite[Theorem~10.4]{Wendland2005} all reproducing kernels are positive semi-definite. \cite[Theorem~10.10]{Wendland2005} shows that a symmetric positive semi-definite kernel $K$ is always a reproducing kernel of a reproducing-kernel Hilbert space $\Hilbert_K(\Domain)$.

If $\Domain$ is open and bounded (pre-compact) and $K\in\Leb_2(\Domain\times\Domain)$ is symmetric positive definite, then Mercer's
theorem~\cite[Theorem~13.5]{Fasshauer2007} guarantees the existence of a countable set of positive values $\lambda_1\geq\lambda_2\geq\cdots>0$ with $\sum_{k=1}^{\infty}\lambda_k<\infty$
and an orthonormal base $\{e_k\}_{k=1}^{\infty}$ of $\Leb_2(\Domain)$ such that
$K$ possesses the absolutely and uniformly convergent representation
\[
K(\vx,\vy)=\sum_{k=1}^{\infty}\lambda_ke_k(\vx)e_k(\vy),\quad \vx,\vy\in\Domain.
\]
This Mercer series of $K$ implies that
\[
\lambda_ke_k(\vy)=\int_{\Domain}K(\vx,\vy)e_k(\vx)\ud\vx,\quad \vy\in\Domain,\quad k\in\NN.
\]

\begin{definition}\label{d:eigval_fun}
The sequences $\{\lambda_k\}_{k=1}^{\infty}$ and $\{e_k\}_{k=1}^{\infty}$ given above are called the \emph{eigenvalues} and \emph{eigenfunctions} of the reproducing kernel $K$.
\end{definition}

Since $\{e_k\}_{k=1}^{\infty}$ is orthonormal in $\Leb_2(\Domain)$ we can compute the series expansion of the integral-type kernel $\Kstar$ defined in Lemma~\ref{l:Gauss-RK}, i.e.,
\[
\Kstar(\vx,\vy)
=\sum_{j=1}^{\infty}\sum_{k=1}^{\infty}\int_{\Domain}\lambda_je_j(\vx)e_j(\vz)\lambda_ke_k(\vz)e_k(\vy)\ud\vz
=\sum_{k=1}^{\infty}\lambda_k^2e_k(\vx)e_k(\vy).
\]
It is easy to check that $\Kstar\in\Leb_2(\Domain\times\Domain)$ is symmetric positive definite and $\{\lambda_k^2\}_{k=1}^{\infty}$ and $\{e_k\}_{k=1}^{\infty}$ are the eigenvalues and eigenfunctions of $\Kstar$.

\begin{theorem}[{\cite[Theorem~10.29]{Wendland2005}}]\label{t:RKHS}
Suppose that $K\in\Leb_2(\Domain\times\Domain)$ is a symmetric positive definite kernel on a pre-compact $\Domain\subset\Rd$.
Then its reproducing-kernel Hilbert space is given by
\[
\Hilbert_K(\Domain)=\left\{f\in\Leb_2(\Domain):\sum_{k=1}^{\infty}\frac{1}{\lambda_k}\abs{\int_{\Domain}f(\vx)e_k(\vx)\ud\vx}^2<\infty\right\}
\]
and the inner product has the representation
\[
\langle f,g \rangle_{K,\Domain}=\sum_{k=1}^{\infty}\frac{1}{\lambda_k}\int_{\Domain}f(\vx)e_k(\vx)\ud\vx\int_{\Domain}g(\vx)e_k(\vx)\ud\vx,\quad f,g\in\Hilbert_K(\Domain),
\]
where $\{\lambda_k\}_{k=1}^{\infty}$ and $\{e_k\}_{k=1}^{\infty}$ are the eigenvalues and eigenfunctions of $K$.
\end{theorem}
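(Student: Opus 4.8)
The plan is to construct the Hilbert space appearing on the right-hand side directly, show that it carries $K$ as its reproducing kernel, and then close the argument by the uniqueness of the reproducing kernel. Write $\hat{f}_k:=\int_{\Domain}f(\vx)e_k(\vx)\,\ud\vx$ for the $\Leb_2(\Domain)$-Fourier coefficients of $f$ relative to the orthonormal basis $\{e_k\}_{k=1}^{\infty}$, and set
\[
H:=\Big\{f\in\Leb_2(\Domain):\ \textstyle\sum_{k\geq1}\lambda_k^{-1}\abs{\hat{f}_k}^2<\infty\Big\},\qquad
\langle f,g\rangle_H:=\sum_{k\geq1}\lambda_k^{-1}\hat{f}_k\hat{g}_k .
\]
First I would check that $(H,\langle\cdot,\cdot\rangle_H)$ is a Hilbert space: the map $f\mapsto(\lambda_k^{-1/2}\hat{f}_k)_{k\geq1}$ is a linear isometry of $H$ into $\ell_2$, and it is surjective because for $(c_k)\in\ell_2$ the series $\sum_k c_k\sqrt{\lambda_k}e_k$ converges in $\Leb_2(\Domain)$ (as $\sum_k\abs{c_k}^2\lambda_k\leq\lambda_1\sum_k\abs{c_k}^2<\infty$) to an element of $H$ whose Fourier coefficients are $c_k\sqrt{\lambda_k}$. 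Completeness of $H$ then follows from that of $\ell_2$, and $\{\sqrt{\lambda_k}e_k\}_{k\geq1}$ is an orthonormal basis of $H$.

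The second stage is to verify that $K$ reproduces $H$. From the Mercer identity $\lambda_ke_k(\vy)=\int_{\Domain}K(\vx,\vy)e_k(\vx)\,\ud\vx$ the Fourier coefficients of $K(\cdot,\vy)$ are $\lambda_ke_k(\vy)$, whence $\norm{K(\cdot,\vy)}_H^2=\sum_k\lambda_k\abs{e_k(\vy)}^2=K(\vy,\vy)<\infty$ by Mercer's theorem, so $K(\cdot,\vy)\in H$. For $f\in H$ a Cauchy--Schwarz estimate gives
\[
\sum_{k\geq1}\abs{\hat{f}_k\,e_k(\vy)}\leq\Big(\sum_{k\geq1}\lambda_k^{-1}\abs{\hat{f}_k}^2\Big)^{1/2}\Big(\sum_{k\geq1}\lambda_k\abs{e_k(\vy)}^2\Big)^{1/2}=\norm{f}_H\,K(\vy,\vy)^{1/2}<\infty,
\]
so $\sum_k\hat{f}_k e_k(\vy)$ converges absolutely (and uniformly on $\overline{\Domain}$, since $K$ is continuous and $\Domain$ pre-compact); I would take this sum as the canonical representative of $f$, and with that identification $\langle f,K(\cdot,\vy)\rangle_H=\sum_k\lambda_k^{-1}\hat{f}_k\,\lambda_ke_k(\vy)=f(\vy)$, which is the reproducing property.

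Finally, since a Hilbert space of functions on $\Domain$ is uniquely determined by its reproducing kernel, I would conclude $H=\Hilbert_K(\Domain)$ and $\langle\cdot,\cdot\rangle_H=\langle\cdot,\cdot\rangle_{K,\Domain}$; alternatively, one checks directly that $\Span\{K(\cdot,\vy):\vy\in\Domain\}$ is dense in $H$ (any $f\in H$ that is $H$-orthogonal to every $K(\cdot,\vy)$ vanishes pointwise by the reproducing property, hence is $0$ in $\Leb_2$), so that $H$ is the completion of this span under $\langle\cdot,\cdot\rangle_{K,\Domain}$, which is the definition of $\Hilbert_K(\Domain)$. The step I expect to be most delicate is the one hidden in the phrase ``canonical representative'': one must make sure that point evaluation is well defined on $H$ — which is exactly what the Cauchy--Schwarz bound above secures — and that Mercer's theorem is being used in its \emph{pointwise}, uniformly convergent form, both for $K(\vy,\vy)=\sum_k\lambda_k e_k(\vy)^2$ and for $\lambda_k e_k(\vy)=\int_{\Domain}K(\vx,\vy)e_k(\vx)\,\ud\vx$; everything else is routine Hilbert-space bookkeeping.
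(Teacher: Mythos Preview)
The paper does not prove this theorem; it is stated in the appendix as a citation of \cite[Theorem~10.29]{Wendland2005} without argument. Your proof is correct and is essentially the standard one (and indeed close to Wendland's): build the weighted $\ell_2$-type space $H$, identify it isometrically with $\ell_2$ via $f\mapsto(\lambda_k^{-1/2}\hat f_k)_k$, use Mercer's pointwise expansion to see that $K(\cdot,\vy)\in H$ with coefficients $\lambda_k e_k(\vy)$, verify the reproducing identity, and conclude by uniqueness of the RKHS. Your flagged ``delicate step'' is the right one to watch, and you have handled it correctly: the Cauchy--Schwarz bound $\sum_k|\hat f_k e_k(\vy)|\leq\norm{f}_H K(\vy,\vy)^{1/2}$ is exactly what makes point evaluation continuous on $H$, and this is precisely where the pointwise (not merely $\Leb_2$) convergence in Mercer's theorem is used. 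There is nothing further to compare.
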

We can verify that $\{\sqrt{\lambda_k}e_k\}_{k=1}^{\infty}$ is an orthonormal base of $\Hilbert_K(\Domain)$.

\begin{example}\label{ex:Sobolev-spline}

The papers~\cite{FasshauerYe2011online,YeIITTech2010} show that the \emph{Sobolev spline} (Mat\'ern function) of degree $m>\frac{d}{2}$,
\[
g_{m,\theta}(\vx):=\frac{2^{1-m-d/2}}{\pi^{d/2}\Gamma(m)\theta^{2m-d}}(\theta\norm{\vx}_2)^{m-d/2}K_{d/2-m}(\theta\norm{\vx}_2),
\quad \vx\in\Rd,\quad\theta>0,
\]
is a full-space Green function of the differential operator $L:=(\theta^2I-\Delta)^m$, i.e., $Lg_{m,\theta}=\delta_0$, where $t\mapsto K_{\nu}(t)$ is the modified Bessel function of the second kind of order $\nu$. The kernel function
\[
K_{m,\theta}(\vx,\vy):=g_{m,\theta}(\vx-\vy),\quad \vx,\vy\in\Rd,
\]
is a positive definite kernel whose reproducing-kernel Hilbert space is equivalent to the $\Leb_2$-based Sobolev space of degree $m$, i.e., $\Hilbert_{K_{m,\theta}}(\Rd)\cong\Hil^m(\Rd)$.
Its inner product has the explicit form
\[
\langle f,g \rangle_{K_{m,\theta},\Rd}=\int_{\Rd}\vP_{\theta} f(\vx)^T\vP_{\theta} g(\vx)\ud\vx,\quad f,g\in\Hilbert_{K_{m,\theta}}(\Rd),
\]
where $\vP_{\theta}^T:=(\vQ^0,\vQ^1,\cdots,\vQ^m)$ and
\[
\vQ^j:=
\begin{cases}
a_j\Delta^k,&j=2k,\\
a_j\Delta^k\nabla^T,&j=2k+1,\\
\end{cases}
\quad a_j:=\sqrt{\frac{m!\theta^{2m-2j}}{j!(m-j)!}},
\quad k\in\NN_0,\quad j=0,1,\cdots,m.
\]
According to \cite[Theorem~1.4.6]{BerlinetThomas2004}, the reproducing-kernel Hilbert space $\Hilbert_{K_{m,\theta}}(\Domain)$ is endowed with the reproducing-kernel norm
\[
\norm{f}_{K_{m,\theta},\Domain}=\inf_{\tilde{f}\in\Hilbert_{K_{m,\theta}}(\Rd)}
\left\{\norm{\tilde{f}}_{K_{m,\theta},\Rd}:\tilde{f}|_{\Domain}=f\right\},\quad
f\in\Hilbert_{K_{m,\theta}}(\Domain).
\]
Moreover, if the open bounded domain $\Domain\subset\Rd$ is regular then $\Hilbert_{K_{m,\theta}}(\Domain)$ is equivalent to the $\Leb_2$-based Sobolev space of degree $m$, i.e.,
$
\Hilbert_{K_{m,\theta}}(\Domain)\cong\Hil^m(\Domain).
$

\end{example}


\section{Differential and Boundary Operators}\label{s:diff-bound}

In this section we define differential and boundary operators on Sobolev spaces
$\Hil^m(\Domain)$. The differential and boundary operators in this paper are well-defined since we assume that the open and bounded domain $\Domain$ is
\emph{regular}, i.e., it satisfies a strong local
Lipschitz condition which implies a uniform cone condition (see~\cite[Chapter~4.1]{AdamsFournier2003}).
This means that $\Domain$ has a regular boundary $\partial\Domain$.

Let the notation for typical derivatives be
\[
D^{\alpha}:=\prod_{k=1}^{d}\frac{\partial^{\alpha_k}}{\partial
x_k^{\alpha_k}}, \quad{}\abs{\alpha}:=\sum_{k=1}^d\alpha_k,\quad{}
\alpha:=\left(\alpha_1,\cdots,\alpha_d\right)\in\NN_0^d.
\]
We extend these derivatives to \emph{weak derivatives} (see~\cite[Chapter~1.5]{AdamsFournier2003}) using the same symbol $D^\alpha$. Using these weak derivatives, the \emph{classical $\Leb_2$-based Sobolev space} $\Hil^m(\Domain)$ is given by
\[
\Hil^m(\Domain):=\left\{f\in\Leb_1^{loc}(\Domain):D^{\alpha}f\in\Leb_2(\Domain),~\abs{\alpha}\leq
m,~\alpha\in\NN_0^d\right\},\quad{}m\in\NN_0,
\]
equipped with the natural inner product
\[
\langle f,g \rangle_{m,\Domain}:=\sum_{\abs{\alpha}\leq
m}\int_{\Domain}D^{\alpha}f(\vx)D^{\alpha}g(\vx)\ud\vx,\quad f,g\in\Hil^m(\Domain).
\]
The weak derivative $D^{\alpha}$ is a bounded linear operator from $\Hil^m(\Domain)$ into $\Leb_2(\Domain)$ when $\abs{\alpha}\leq m$.
Moreover, $D^{\alpha}f$ is well-posed on the boundary $\partial\Domain$ when $f\in\Cont^m(\overline{\Domain})$ and $\abs{\alpha}\leq m-1$ and we denote that $D^{\alpha}|_{\partial\Domain}f:=D^{\alpha}f|_{\partial\Domain}$. The book~\cite{AdamsFournier2003} and the paper~\cite{FasshauerYe2011} show that $D^{\alpha}|_{\partial\Domain}$ can be extended to a bounded linear operator from $\Hil^m(\Domain)$ into $\Leb_2(\partial\Domain)$ when $\abs{\alpha}\leq m-1$ because $\Domain$ has a regular boundary $\partial\Domain$.
The $\Leb_2(\partial\Domain)$-inner product is denoted by
\[
\langle f,g \rangle_{\partial\Domain}=\int_{\partial\Domain}f(\vx)g(\vx)\ud S(\vx),\quad
\text{when }d\geq2\text{ and }\partial\Domain\text{ is the boundary manifold},
\]
and
\[
\langle f,g \rangle_{\partial\Domain}=f(b)g(b)+f(a)g(a),\quad
\text{when }d=1\text{ and }\partial\Domain=\{a,b\}.
\]

\begin{definition}\label{d:diff-bound}
A \emph{differential operator}
$P:\Hil^m(\Domain)\rightarrow\Leb_2(\Domain)$ is well-defined by
\[
P=\sum_{\abs{\alpha}\leq m}c_{\alpha}
D^{\alpha},\quad{}\text{where
}c_{\alpha}\in\Cont^{\infty}(\overline{\Domain})\text{ and
}\alpha\in\NN_0^d,~m\in\NN_0,
\]
and its \emph{order} is given by $
\Order(P):=\max\left\{\abs{\alpha}:c_{\alpha}\not\equiv0,~\abs{\alpha}\leq
m,~\alpha\in\NN_0^d\right\}.
$
A \emph{boundary operator}
$B:\Hil^m(\Domain)\rightarrow\Leb_2(\partial\Domain)$ is well-defined
by
\[
B=\sum_{\abs{\alpha}\leq
m-1}b_{\alpha}D^{\alpha}|_{\partial\Domain},\quad{}\text{where
}b_{\alpha}\in\Cont^{\infty}(\partial\Domain)\text{ and
}\alpha\in\NN_0^d,~m\in\NN,
\]
and its \emph{order} is given by
$
\Order(B):=\max\left\{\abs{\alpha}:b_{\alpha}\not\equiv0,~\abs{\alpha}\leq
m-1,~\alpha\in\NN_0^d\right\}.$
\end{definition}
It is obvious that the differential operator $P$ and the boundary operator $B$ are bounded (continuous) linear operators on $\Hil^m(\Domain)$ with values in $\Leb_2$ whenever $\Order(P)\leq m$ and $\Order(B)\leq m-1$. Much more detail on differential and boundary operators can be found in~\cite{AdamsFournier2003,FasshauerYe2011}.

If $\Hilbert_K(\Domain)$ is embedded\footnote{The recent papers~\cite{FasshauerYe2011online,FasshauerYe2011,YeIITTech2010} show that there exist kernels $K$ whose reproducing-kernel Hilbert space $\Hilbert_K(\Domain)$ is continuously embedded into the Sobolev space $\Hil^m(\Domain)$.} into $\Hil^m(\Domain)$ then the eigenvalues $\{\lambda_k\}_{k=1}^{\infty}$ and eigenfunctions $\{e_k\}_{k=1}^{\infty}$ of the reproducing kernel $K$ satisfy
\[
\lambda_k\norm{e_k}_{m,\Domain}^2\leq C^2\norm{\sqrt{\lambda_k}e_k}_{K,\Domain}^2=C^2,\quad k\in\NN,
\]
because $\{\sqrt{\lambda_k}e_k\}_{k=1}^{\infty}$ is an orthonormal base of $\Hilbert_K(\Domain)$.
When $m>d/2$ then $\Hil^m(\Domain)$ is embedded into $\Cont(\overline{\Domain})$ by the Sobolev embedding theorem.
This implies that $K\in\Cont(\overline{\Domain}\times\overline{\Domain})\subset\Leb_2(\Domain\times\Domain)$ because $K(\cdot,\vy)\in\Cont(\overline{\Domain})$ for each $\vy\in\Domain$ and $K$ is symmetric.
Based on these properties, we can introduce the following lemma.
\begin{lemma}\label{l:P-B-expan}
Consider a differential operator $P$ with order $\Order(P)\leq m$ and a boundary operator $B$ with order $\Order(B)\leq m-1$, where $m>d/2$. If the reproducing-kernel Hilbert space $\Hilbert_K(\Domain)$ is embedded into the Sobolev space $\Hil^m(\Domain)$, then
\[
Pf=\sum_{k=1}^{\infty}\hat{f}_k\sqrt{\lambda_k}Pe_k,\quad
Bf=\sum_{k=1}^{\infty}\hat{f}_k\sqrt{\lambda_k}Be_k,\quad
f\in\Hilbert_K(\Domain),
\]
where $\hat{f}_k=\langle f,e_k \rangle_{K,\Domain}$ for each $k\in\NN$ and $\{\lambda_k\}_{k=1}^{\infty}$ and $\{e_k\}_{k=1}^{\infty}$ are the eigenvalues and eigenfunctions of the reproducing kernel $K$.
\end{lemma}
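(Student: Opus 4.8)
The plan is to exploit the fact that $\{\sqrt{\lambda_k}e_k\}_{k=1}^\infty$ is an orthonormal basis of $\Hilbert_K(\Domain)$, so that any $f\in\Hilbert_K(\Domain)$ admits the norm-convergent expansion $f=\sum_{k=1}^\infty \hat f_k\sqrt{\lambda_k}e_k$ in $\Hilbert_K(\Domain)$, where $\hat f_k=\langle f,e_k\rangle_{K,\Domain}$. First I would write $f_n:=\sum_{k=1}^n \hat f_k\sqrt{\lambda_k}e_k$ and note that $f_n\to f$ in $\Hilbert_K(\Domain)$. The key structural input is the embedding $\Hilbert_K(\Domain)\hookrightarrow\Hil^m(\Domain)$, which gives a constant $C$ with $\norm{g}_{m,\Domain}\le C\norm{g}_{K,\Domain}$ for all $g\in\Hilbert_K(\Domain)$; hence $f_n\to f$ also in the Sobolev norm $\norm{\cdot}_{m,\Domain}$.

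Next I would invoke the continuity of $P$ and $B$ as established in Appendix~\ref{s:diff-bound}: since $\Order(P)\le m$, $P:\Hil^m(\Domain)\to\Leb_2(\Domain)$ is bounded, and since $\Order(B)\le m-1$, $B:\Hil^m(\Domain)\to\Leb_2(\partial\Domain)$ is bounded. Applying these bounded operators to the convergent sequence $f_n\to f$ in $\Hil^m(\Domain)$ yields $Pf_n\to Pf$ in $\Leb_2(\Domain)$ and $Bf_n\to Bf$ in $\Leb_2(\partial\Domain)$. By linearity, $Pf_n=\sum_{k=1}^n\hat f_k\sqrt{\lambda_k}Pe_k$ and $Bf_n=\sum_{k=1}^n\hat f_k\sqrt{\lambda_k}Be_k$, so passing to the limit gives exactly the claimed series representations, with convergence in $\Leb_2(\Domain)$ and $\Leb_2(\partial\Domain)$ respectively.

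The one point requiring a little care — and what I expect to be the main obstacle — is making precise the sense in which $Pe_k$ and $Be_k$ are well-defined individual terms and in which the series converges. Each $e_k=(1/\sqrt{\lambda_k})\cdot\sqrt{\lambda_k}e_k\in\Hilbert_K(\Domain)\subset\Hil^m(\Domain)$, so $Pe_k\in\Leb_2(\Domain)$ and $Be_k\in\Leb_2(\partial\Domain)$ are individually meaningful; the estimate $\lambda_k\norm{e_k}_{m,\Domain}^2\le C^2$ noted just before the lemma controls their sizes. Since $m>d/2$ one in fact has $\Hil^m(\Domain)\subset\Cont^n(\overline{\Domain})$ with $n=\lceil m-d/2\rceil-1$, so when $\Order(P)\le n$ and $\Order(B)\le n$ the convergence can be upgraded to pointwise (indeed uniform on $\overline{\Domain}$ and on $\partial\Domain$); I would remark on this but state the lemma's conclusion in the $\Leb_2$ sense as above, which is all that is needed. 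The routine verification that $\{\sqrt{\lambda_k}e_k\}$ is an orthonormal basis of $\Hilbert_K(\Domain)$ is already recorded in Appendix~\ref{s:RKHS}, so I would simply cite it rather than reprove it.
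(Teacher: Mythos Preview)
Your proposal is correct and follows essentially the same route as the paper: define partial sums $f_n$, use the continuous embedding $\Hilbert_K(\Domain)\hookrightarrow\Hil^m(\Domain)$ to transfer convergence $f_n\to f$ from the RKHS norm to the Sobolev norm, and then apply the boundedness of $P$ and $B$ on $\Hil^m(\Domain)$ to pass the limit through. The paper's version is slightly terser and omits your remarks about the sense of convergence and the possible pointwise upgrade, but the argument is the same.
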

\begin{proof}
According to Theorem~\ref{t:RKHS} each $f\in\Hilbert_K(\Domain)$ can be expanded as $f=\sum_{k=1}^{\infty}\hat{f}_k\sqrt{\lambda_k}e_k$. Since $\{\sqrt{\lambda_k}e_k\}_{k=1}^{\infty}$ is an orthonormal basis we have $\sum_{k=1}^{\infty}\abs{\hat{f}_k}^2<\infty$.
Let $f_n:=\sum_{k=1}^{n}\hat{f}_k\sqrt{\lambda_k}e_k$ for each $n\in\NN$. Then
\[
\norm{f_n-f}_{m,\Domain}^2\leq C^2\norm{f_n-f}_{K,\Domain}^2
\leq C^2\sum_{k=n+1}^{\infty}\abs{\hat{f}_k}^2\rightarrow0,\quad \text{when }n\rightarrow\infty.
\]
The proof is completed by remembering that $P$ and $B$ are bounded linear operators on $\Hil^m(\Domain)$.
\end{proof}

If $\Hilbert_K(\Domain)$ is embedded into $\Hil^m(\Domain)$, then for each $\abs{\alpha}\leq m$, $\abs{\beta}\leq m$ and $\alpha,\beta\in\NN_0^d$, we have
\[
\begin{split}
&\left(\int_{\Domain}\int_{\Domain}\abs{\sum_{k=1}^{\infty}\lambda_k^2D^{\alpha}e_k(\vx)D^{\beta}e_k(\vy)}^2\ud\vx\ud\vy\right)^{1/2}\\
\leq&\sum_{k=1}^{\infty}\lambda_k^2\norm{D^{\alpha}e_k}_{\Leb_2(\Domain)}\norm{D^{\beta}e_k}_{\Leb_2(\Domain)}
\leq\sum_{k=1}^{\infty}\lambda_k^2\norm{e_k}_{m,\Domain}^2\leq C^2\sum_{k=1}^{\infty}\lambda_k<\infty,
\end{split}
\]
which implies that $\Kstar\in\Hil^{m,m}(\Domain\times\Domain)$.
Let $n:=\lceil m -d/2\rceil-1$. The Sobolev embedding theorem shows that $\Hil^{m,m}(\Domain\times\Domain)\subset\Cont^{n,n}(\overline{\Domain}\times\overline{\Domain})$.
Then we can obtain the following lemma.
\begin{lemma}\label{l:P-B-Cov-expan}
Consider a differential operator $P$ with order $\Order(P)< m-d/2$ and a boundary operator $B$ with order $\Order(B)< m-d/2$, where $m>d/2$. If the reproducing-kernel Hilbert space $\Hilbert_K(\Domain)$ is embedded into the Sobolev space $\Hil^m(\Domain)$, then
\[
\begin{split}
&P_1P_2\Kstar(\vx,\vy):=P_{\vz_1}P_{\vz_2}\Kstar(\vz_1,\vz_2)|_{\vz_1=\vx,\vz_2=\vy}=\sum_{k=1}^{\infty}\lambda_k^2Pe_k(\vx)Pe_k(\vy),\\
&B_1B_2\Kstar(\vx,\vy):=B_{\vz_1}B_{\vz_2}\Kstar(\vz_1,\vz_2)|_{\vz_1=\vx,\vz_2=\vy}=\sum_{k=1}^{\infty}\lambda_k^2Be_k(\vx)Be_k(\vy),\\
&P_1B_2\Kstar(\vx,\vy):=P_{\vz_1}B_{\vz_2}\Kstar(\vz_1,\vz_2)|_{\vz_1=\vx,\vz_2=\vy}=\sum_{k=1}^{\infty}\lambda_k^2Pe_k(\vx)Be_k(\vy),\\
\end{split}
\]
where $\{\lambda_k\}_{k=1}^{\infty}$ and $\{e_k\}_{k=1}^{\infty}$ are the eigenvalues and eigenfunctions of $K$.
Moreover, $P_1P_2\Kstar\in\Cont(\overline{\Domain}\times\overline{\Domain})$, $B_1B_2\Kstar\in\Cont(\partial\Domain\times\partial\Domain)$
and $P_1B_2\Kstar\in\Cont(\overline{\Domain}\times\partial\Domain)$.
\end{lemma}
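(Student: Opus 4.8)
The plan is to mimic the truncation argument already used for Lemma~\ref{l:P-B-expan}: expand $\Kstar$ in its Mercer-type series, differentiate and restrict term by term, and then pass to the limit, exploiting the fact that $P$ and $B$ act continuously on the relevant spaces of continuously differentiable functions. First I would settle the index bookkeeping. With $n:=\lceil m-d/2\rceil-1$, the hypotheses $\Order(P)<m-d/2$ and $\Order(B)<m-d/2$ force $\Order(P)\le n$ and $\Order(B)\le n$, since these orders are nonnegative integers. Set $\Kstar_n(\vx,\vy):=\sum_{k=1}^{n}\lambda_k^2 e_k(\vx)e_k(\vy)$, the $n$-th partial sum of the expansion $\Kstar=\sum_{k=1}^{\infty}\lambda_k^2 e_k(\vx)e_k(\vy)$ derived in Appendix~\ref{s:RKHS}.

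Next I would show that $\Kstar_n\to\Kstar$ in $\Hil^{m,m}(\Domain\times\Domain)$. This is exactly the estimate carried out just before the statement of this lemma, but applied to the tail of the series instead of the whole series: for $\abs{\alpha}\le m$ and $\abs{\beta}\le m$,
\[
\norm{D_1^{\alpha}D_2^{\beta}(\Kstar-\Kstar_n)}_{\Leb_2(\Domain\times\Domain)}
\le\sum_{k>n}\lambda_k^2\norm{D^{\alpha}e_k}_{\Leb_2(\Domain)}\norm{D^{\beta}e_k}_{\Leb_2(\Domain)}
\le C^2\sum_{k>n}\lambda_k,
\]
and summing over the finitely many pairs $(\alpha,\beta)$ with $\abs{\alpha},\abs{\beta}\le m$ gives $\norm{\Kstar-\Kstar_n}_{\Hil^{m,m}(\Domain\times\Domain)}\to0$, because $\sum_k\lambda_k<\infty$. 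Invoking the Sobolev embedding $\Hil^{m,m}(\Domain\times\Domain)\subset\Cont^{n,n}(\overline{\Domain}\times\overline{\Domain})$ recalled above, this convergence upgrades to $\Kstar_n\to\Kstar$ in $\Cont^{n,n}(\overline{\Domain}\times\overline{\Domain})$; equivalently, $D_1^{\alpha}D_2^{\beta}\Kstar_n\to D_1^{\alpha}D_2^{\beta}\Kstar$ uniformly on $\overline{\Domain}\times\overline{\Domain}$ for all $\abs{\alpha},\abs{\beta}\le n$.

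I would then observe that, since their coefficients are continuous (hence bounded) on the compact set $\overline{\Domain}$ and their orders satisfy $\Order(P),\Order(B)\le n$, the operators $P$ and $B$ are bounded linear maps from $\Cont^{n}(\overline{\Domain})$ into $\Leb_{\infty}(\Domain)$ and into $\Leb_{\infty}(\partial\Domain)$, respectively (for $B$ one also uses that restriction to $\partial\Domain\subset\overline{\Domain}$ does not increase the sup-norm). Consequently $P_1P_2$, $B_1B_2$ and $P_1B_2$ are bounded linear operators from $\Cont^{n,n}(\overline{\Domain}\times\overline{\Domain})$ into $\Cont(\overline{\Domain}\times\overline{\Domain})$, $\Cont(\partial\Domain\times\partial\Domain)$ and $\Cont(\overline{\Domain}\times\partial\Domain)$, respectively. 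Applying them to the truncations, and noting $P_1P_2\Kstar_n=\sum_{k=1}^{n}\lambda_k^2 Pe_k(\vx)Pe_k(\vy)$ together with the analogous identities for $B_1B_2\Kstar_n$ and $P_1B_2\Kstar_n$, boundedness allows me to interchange these operators with the $\Cont^{n,n}$-limit from the previous step. Hence the series $\sum_k\lambda_k^2 Pe_k(\vx)Pe_k(\vy)$, $\sum_k\lambda_k^2 Be_k(\vx)Be_k(\vy)$ and $\sum_k\lambda_k^2 Pe_k(\vx)Be_k(\vy)$ converge uniformly on the respective product sets to $P_1P_2\Kstar$, $B_1B_2\Kstar$ and $P_1B_2\Kstar$, which are the claimed identities.

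Finally, the continuity assertions come for free: each $e_k\in\Hilbert_K(\Domain)\subset\Hil^m(\Domain)\subset\Cont^{n}(\overline{\Domain})$, so $Pe_k\in\Cont(\overline{\Domain})$ and $Be_k\in\Cont(\partial\Domain)$; thus every partial sum $P_1P_2\Kstar_n$, $B_1B_2\Kstar_n$, $P_1B_2\Kstar_n$ is continuous on $\overline{\Domain}\times\overline{\Domain}$, $\partial\Domain\times\partial\Domain$, $\overline{\Domain}\times\partial\Domain$ respectively, and a uniform limit of continuous functions is continuous. The step I expect to require the most care is precisely this interchange of operators and limit: one must make rigorous that the restrictions $\Order(P),\Order(B)<m-d/2$ are exactly what make $P$, $B$, and hence the tensor operators $P_1P_2$, $B_1B_2$, $P_1B_2$, bounded on $\Cont^{n,n}(\overline{\Domain}\times\overline{\Domain})$, so that they commute with the series limit; the remaining ingredients are the same routine truncation estimates already established for Lemma~\ref{l:P-B-expan} and in the paragraph preceding this lemma.
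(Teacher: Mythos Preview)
Your argument is correct and is precisely the approach the paper intends: the paper does not give a separate proof of this lemma but only the preparatory estimate showing $\Kstar\in\Hil^{m,m}(\Domain\times\Domain)$ together with the Sobolev embedding $\Hil^{m,m}(\Domain\times\Domain)\subset\Cont^{n,n}(\overline{\Domain}\times\overline{\Domain})$, after which it simply writes ``Then we can obtain the following lemma.'' Your write-up fills in exactly the truncation-and-limit details that this sentence is meant to invoke. One cosmetic point: you use the symbol $n$ both for the smoothness index $n:=\lceil m-d/2\rceil-1$ and for the truncation index of the partial sums $\Kstar_n$; pick a different letter for one of them.
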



\section*{Acknowledgments}

We would like to thank Phaedon-Stelios Koutsourelakis for the inspiration to solve SPDEs with a maximum likelihood-based approach.
The work of Igor Cialenco was partially supported by the National Science Foundation (NSF) grant DMS-0908099.
Gregory E. Fasshauer and Qi Ye acknowledge support from NSF grants DMS-0713848 and DMS-1115392.
The authors would like to thank the anonymous referee and the editors for their helpful comments and suggestions which improved greatly
the final manuscript.



\end{document}